\newtheorem{algorithm}{Algorithm}
\journalname{}
\begin{document}

\title{Parallel hybrid extragradient methods for pseudomonotone equilibrium problems and nonexpansive mappings}


\titlerunning{Parallel hybrid extragradient methods}        

\author{Dang Van Hieu         \and
        Le Dung Muu \and Pham Ky Anh}

\authorrunning{D.V. Hieu, L.D. Muu, and P.K. Anh} 
\institute{Dang Van Hieu \at
              Department of Mathematics, Vietnam National University,  \\
              334 Nguyen Trai, Thanh Xuan, Hanoi, Vietnam \\
              \email{dv.hieu83@gmail.com}           
           \and
           Le Dung Muu \at
              Institute of Mathematics, VAST, Hanoi, \\
             18 Hoang Quoc Viet, Hanoi, Vietnam \\
               \email{ldmuu@math.ac.vn}
             \and
             Pham Ky Anh \at
             Department of Mathematics, Vietnam National University, Hanoi,  \\
              334 Nguyen Trai, Thanh Xuan, Hanoi, Vietnam \\
             \email{anhpk@vnu.edu.vn}
}

%
%
\date{Received: date / Accepted: date}

\maketitle

\begin{abstract}
In this paper we propose and analyze three parallel hybrid extragradient methods for finding a common element of the set of  solutions of 
equilibrium problems involving pseudomonotone bifunctions and the set of fixed
points of nonexpansive mappings in a real Hilbert space. Based on parallel computation we can reduce 
the overall computational effort under widely used conditions on the bifunctions and the nonexpansive mappings.
A simple numerical example is given to illustrate the proposed parallel algorithms.
\keywords{Equilibrium problem\and Pseudomonotone bifunction\and Lipschitz-type continuity\and Nonexpansive mapping\and Hybrid 
method\and Parallel computation}
\end{abstract}

\section{Introduction}\label{intro}
Let $C $ be a nonempty closed convex subset of a real Hilbert space $H.$ The equilibrium problem for a bifunction 
$f: C \times C \to \Re  \cup \{+ \infty \},$ satisfying condition $f(x,x) = 0$ for every $x \in C$, is stated as follows:
\begin{equation}\label{EP}
{\rm Find}\,\, x^* \in C\,\, {\rm such \,that}\quad f(x^*,y)\ge 0 \quad \forall y\in C.
\end{equation}
The set of solutions of $(\ref{EP})$ is denoted by $EP(f)$. Problem $(\ref{EP})$ includes, as special cases, many mathematical models, 
such as, 
optimization problems, saddle point problems, Nash equilibrium point problems, fixed point problems, convex differentiable optimization 
problems, variational inequalities, complementarity problems, etc., see \cite{BO1994,MO1992}. In recent years, many methods have been proposed 
for 
solving equilibrium problems, for instance, see \cite{KSZ2010,SK2012,SLZ2011,TT2007} and the references therein.\\
A mapping $T:C\to C$ is said to be nonexpansive if $||T(x)-T(y)||\le ||x-y||$ for all $x,y\in C$. The set of fixed points of $T$ is denoted by  $F(T)$. \\
Finding common elements  of the solution set of an equilibrium problem and the fixed point set of a nonexpansive mapping is a task arising frequently in various areas of mathematical sciences, engineering, and economy. For example, we consider the following extension of  a Nash-Cournot oligopolistic equilibrium model  \cite{FP2002}.\\
 Assume that there are $n$ companies that produce a commodity. Let $x$
denote the vector whose entry $x_j$ stands for the quantity  of
 the commodity producing  by company $j$. We suppose that the price $p_i(s)$ is a
decreasing affine function of $s$ with $s= \sum_{j=1}^n x_j$, i.e., $p_i(s)= \alpha_i - \beta_i s$, where $\alpha_i > 0$, $\beta_i > 0$. Then
the profit made by company $j$ is given by $f_j(x)= p_j(s) x_j) -
c_j( x_j)$, where $c_j(x_j)$ is the tax for generating $x_j$.
Suppose that $K_j$ is the strategy set of company $j$,  Then the
strategy set of the model is $K:= K_1\times\times ...\times K_n$.
Actually, each company seeks to  maximize its profit by choosing the
corresponding production level under the presumption that  the
production of the other companies is a parametric input.
 A commonly used approach to this model is based upon the famous Nash
   equilibrium concept.\\
\indent We recall  that a point $x^* \in K=K_1\times
K_2\times\cdots\times K_n$ is  an
    equilibrium point of the model   if
    $$f_j(x^*) \geq f_j(x^*[x_j]) \ \forall x_j \in K_j, \ \forall  j=1,2,\ldots,n,$$
    where the vector $x^*[x_j]$ stands for the vector obtained from
    $x^*$ by replacing $x^*_j$ with $x_j$.
By taking
$$f(x, y):= \psi(x,y)-\psi(x,x)$$
with
\begin{equation}\label {pep}
\psi(x,y):=  -\sum_{j = 1}^n f_j(x[y_j]),
\end{equation}
the problem of finding a Nash equilibrium point of the model can be
formulated as
$$x^* \in K: f(x^*,x) \geq 0 \ \forall x \in K. \eqno(EP)$$
 In practice each company has to pay a fee $g_j(x_j)$ depending on its production level
 $x_j$.\\
The problem now is to find an equilibrium point with minimum fee.
We suppose that  both  tax and   fee functions   are convex for every $j$. The convexity assumption    means that the  tax and fee  for producing a unit are increasing as the quantity of the production gets larger.
The convex assumption  on $c_j$ implies that   the bifunction $f$ is monotone on $K$, while the convex assumption on $g_j$ ensures that the solution-set of the convex  problem $$\min\{g(x) =
 \sum_{j-1}^n g_j(x_j): x\in K\}$$
  coincides with  fixed point-set of the nonexpansive  proximal
  operator $P:= (I +c\partial g)^{-1}$ with $c > 0$  \cite{R1976}.\\
Thus the problem of finding an equilibrium point with minimal cost
 is actually of the same kind as the problem studied in this paper.\\
Gradient based methods dealing with equilibrium problems as well as iteration methods for nonexpansive and 
pseudocontractive mappings have been studied by several authors ( see, \cite{YPL2013,YPL2013a,TP2013,YP2012,YPL2013} and 
the references therein).\\ 
For finding a common element of the set of solutions of  monotone equilibrium problem $(\ref{EP})$ 
 and the set of fixed points of  a nonexpansive mapping $T$ in Hilbert spaces, Tada and Takahashi \cite{TT2006} proposed the following 
hybrid method:
$$
\left \{
\begin{array}{ll}
&x_0\in C_0=Q_0=C,\\ 
&z_n \in C\quad \mbox{such that}\quad f(z_n,y)+\frac{1}{\lambda_n}\left\langle y-z_n,z_n-x_n \right\rangle \ge 0, \forall y\in C,\\
&w_n=\alpha_n x_n+(1-\alpha_n)T(z_n),\\
&C_n = \{v \in C: ||w_n - v||\leq ||x_n-v||\},\\ 
& Q_n = \{ v \in C:  \langle x_0 - x_n, v - x_n \rangle \leq 0 \},\\
&x_{n+1}= P_{C_n \cap Q_n}(x_0).
\end{array}
\right.
$$
According to the above algorithm, at each step for determining the intermediate approximation $z_n$ we need to solve a strongly 
monotone regularized 
equilibrium problem
\begin{equation}\label{eq:Re.subproblem}
\mbox{Find}\,z _n \in C, ~ \mbox{such that}\, f(z_n,y)+\frac{1}{\lambda_n}\left\langle y-z_n,z_n-x_n \right\rangle \ge 0, ~ \forall y\in C.
\end{equation}
If the bifunction $f$ is only pseudomonotone,  then subproblem $(\ref{eq:Re.subproblem})$ is not necessarily
strongly monotone, even not pseudomonotone, hence the existing algorithms using the monotonicity of the subproblem, cannot be applied. 
To overcome this difficulty, Anh \cite{A2011} proposed the following hybrid 
extragradient method for finding a common element of the set of fixed points of a nonexpansive mapping $T$ and the set of solutions of 
an equilibrium problem involving a pseudomonotone bifunction $f$.
$$
\left \{
\begin{array}{ll}
&x_0 \in C,C_0=Q_0=C,\\ 
&y_n= \arg\min \{ \lambda_n f(x_n, y) +\frac{1}{2}||x_n-y||^2:  y \in C\},\\
&t_n = \arg\min \{ \lambda_n f(y_n, y) +\frac{1}{2}||x_n-y||^2:  y \in C\},\\
&z_n=\alpha_n x_n+(1-\alpha_n)T(t_n),\\
&C_n = \{v \in C: ||z_n - v||\leq ||x_n-v||\},\\ 
& Q_n = \{ v \in C:  \langle x_0 - x_n, v - x_n \rangle \leq 0 \},\\
&x_{n+1}= P_{C_n \cap Q_n}(x_0).
\end{array}
\right.
$$
Under certain assumptions, the strong convergence of the sequences 
$\left\{x_n\right\}$, $\left\{y_n\right\}$, $\left\{z_n\right\}$ to $x^\dagger:=P_{EP(f)\cap F(T)}x_0$ has been established. \\
Very recently, Anh and Chung \cite{AC2013} have proposed the following parallel hybrid method for finding a common fixed point of a 
finite family of relatively nonexpansive mappings $\left\{T_i\right\}_{i=1}^N.$ 
\begin{equation}\label{Algor.AC2014}
\left \{
\begin{array}{ll}
&x_0 \in C,C_0=Q_0=C,\\ 
&y_n^i=J^{-1}\left(\alpha_n Jx_n+(1-\alpha_n)JT_i(x_n)\right),i=1,\ldots,N,\\
&i_n=\arg\max_{1\le i\le N}\left\{\left\|y_n^i-x_n \right\|\right\}, \quad  \bar{y}_n := y_n^{i_n},\\
&C_n=\left\{v\in C:\phi(v,\bar{y}_n)\le \phi(v,x_n)\right\},\\
&Q_n=\left\{v\in C : \left\langle Jx_0-Jx_n,x_n-v\right\rangle \ge 0\right\},\\
&x_{n+1}=P_{C_n\bigcap Q_n}x_0,n\ge 0,
\end{array}
\right.
\end{equation}
where $J$ is the normalized duality mapping and $\phi(x,y)$ is the Lyapunov functional. 
This algorithm was extended, modified and generelized by Anh and Hieu \cite{AH2014} for a finite family of asymptotically quasi 
$\phi$-nonexpansive mappings in Banach spaces. \\
According to algorithm $(\ref{Algor.AC2014})$, the intermediate approximations $y_n^i$ can be found in parallel. Then the farthest 
element from $x_n$ among all 
$y_n^i,  i=1,\ldots,N,$ denoted by $\bar{y}_n$, is chosen. Using the element $\bar{y}_n,$ the authors constructed two convex 
closed subsets $C_n$ and $Q_n$ containing the set of common fixed points $F$ and seperating the initial approximation $x_0$ from $F$. 
The next approximation $x_{n+1}$ is defined as the projection of $x_0$ onto the intersection $C_n\bigcap Q_n$. \\
The purpose of this paper is to propose three parallel hybrid extragradient algorithms for finding a common 
element of the set of solutions 
of a finite family of equilibrium problems for pseudomonotone bifunctions $\left\{f_i\right\}_{i=1}^N$ and the set of fixed points of a 
finite family of nonexpansive mappings $\left\{S_j\right\}_{j=1}^M$ in Hilbert spaces. We combine the extragradient method for dealing 
with pseudomonotone equilibrium problems (see, \cite{A2011,QMH2008}), and Mann's or Halpern's
iterative algorithms for finding fixed points of 
nonexpansive mappings \cite{H1967,M1953}, with parallel splitting-up techniques \cite{AC2013,AH2014}, as well as hybrid methods 
(see, \cite{A2011,AC2013,AH2014,KSZ2010,PU2008,SK2012,SLZ2011}) to obtain the strong convergence of iterative processes.\\
The paper is organized as follows: In Section 2, we recall some definitions and preliminary results. Section 3 deals with novel 
parallel hybrid algorithms and their convergence analysis. Finally, in Section 4, we  illustrate the propesed parallel hybrid 
methods by considering  a simple numerical experiment.
\section{Preliminaries}\label{Preli}
In this section, we recall some definitions and results that will be used in the sequel. 
Let $C$ be a nonempty closed convex subset of a Hilbert space 
$H$ with an inner product $\left\langle .,.\right\rangle$ and the induced norm $||.||$. Let $T:C\to C$ be a nonexpansive mapping with the 
set of fixed points $F(T)$.\\
We begin with the following properties of nonexpansive mappings.
\begin{lemma}\cite{GK1990}\label{lem.demiclose}
Assume that $T:C\to C$ is a nonexpansive mapping. If $T$ has a fixed point, then 
\begin{enumerate}
\item [$(i)$] $F(T)$ is a closed convex subset of $H$.
\item [$(ii)$] $I-T$ is demiclosed, i.e., whenever $\left\{x_n\right\}$ is a sequence in $C$ weakly converging to some $x\in C$ and the 
sequence $\left\{(I-T)x_n\right\}$ strongly converges to some $y$, it follows that $(I-T)x=y$.
\end{enumerate}
\end{lemma}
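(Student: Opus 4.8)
The plan is to prove the two standard properties of the fixed point set of a nonexpansive mapping $T$ on a closed convex set $C$ in a Hilbert space, following the classical argument from Goebel and Kirk.

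For part $(i)$, the idea is first to show that $F(T)$ is convex. Given $x,y\in F(T)$ and $t\in[0,1]$, set $z=tx+(1-t)y$; I would estimate $\|Tz-x\|=\|Tz-Tx\|\le\|z-x\|=(1-t)\|x-y\|$ and symmetrically $\|Tz-y\|\le t\|x-y\|$, so that $\|x-y\|\le\|x-Tz\|+\|Tz-y\|\le\|x-y\|$. Equality throughout forces $Tz$ to lie on the segment $[x,y]$ at the same parameter, i.e. $Tz=z$; here one uses strict convexity of the Hilbert norm (the fact that $\|a\|+\|b\|=\|a+b\|$ with $a,b\ne 0$ implies $a,b$ are positively proportional). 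Closedness of $F(T)$ is immediate: if $x_n\in F(T)$ and $x_n\to x$, then by continuity (nonexpansiveness) $Tx_n\to Tx$, while $Tx_n=x_n\to x$, so $Tx=x$.

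For part $(ii)$, I would argue by contradiction or directly via the Opial-type inequality. Let $x_n\rightharpoonup x$ in $C$ and $(I-T)x_n\to y$; I want $(I-T)x=y$. Using weak lower semicontinuity of $\|\cdot\|^2$ and the identity
\begin{equation*}
\|x_n-Tx\|^2=\|x_n-Tx_n\|^2+2\langle x_n-Tx_n,Tx_n-Tx\rangle+\|Tx_n-Tx\|^2,
\end{equation*}
together with $\|Tx_n-Tx\|\le\|x_n-x\|$, one derives a contradiction with $\liminf\|x_n-x\|<\liminf\|x_n-Tx\|$ unless $x-y$ is a fixed point situation; more cleanly, apply Opial's lemma: if $x-(y)\ne Tx$ for the weak limit, comparing $\liminf\|x_n-x\|$ and $\liminf\|x_n-(Tx+y)\|$ — wait, the clean route is to note $x_n-(I-T)x_n\to x-y$ while $x_n-(I-T)x_n = Tx_n$, and show $Tx_n\rightharpoonup x-y$ forces $T(x)=x-y$, i.e. $(I-T)x=y$. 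Concretely: $T x_n \rightharpoonup x - y$; then using nonexpansiveness and Opial's inequality on the weakly convergent sequence $\{x_n\}$ with weak limit $x$, one gets $\limsup\|Tx_n-T x\|\le\limsup\|x_n-x\|<\limsup\|x_n-(x-y)\|$ if $T x\ne x-y$ — and since $Tx_n\to$ has $x-y$ as weak limit, $\limsup\|Tx_n-(x-y)\|\le\limsup\|Tx_n-Tx\|$, yielding a strict contradiction. Hence $Tx=x-y$, i.e. $(I-T)x=y$.

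The main obstacle is the demiclosedness in $(ii)$: it genuinely requires a Hilbert-space geometric tool (Opial's property, or equivalently the Kadec–Klee-type argument via the parallelogram law), rather than pure metric nonexpansiveness. I would make sure to state and invoke Opial's lemma — that for $x_n\rightharpoonup x$ and $z\ne x$, $\liminf_n\|x_n-x\|<\liminf_n\|x_n-z\|$ — as the key lever, since everything else in the proof is routine norm manipulation. Since the paper cites \cite{GK1990} for this lemma, I would present the above as a recollection of the standard proof and not belabor the details.
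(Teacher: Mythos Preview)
The paper does not give its own proof of this lemma; it simply cites it from Goebel and Kirk \cite{GK1990} and moves on. So there is nothing to compare against, and your plan to present a brief recollection of the classical argument is appropriate.

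Your sketch for part $(i)$ is clean and correct: the strict convexity of the Hilbert norm forces $Tz$ back onto the segment at the right parameter, and closedness follows from continuity of $T$.

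For part $(ii)$ the idea is right --- Opial's property is indeed the key lever --- but the write-up wanders. A streamlined version of what you are doing: suppose $(I-T)x\neq y$, i.e.\ $Tx+y\neq x$. From $(I-T)x_n\to y$ one has $\|Tx_n-(x_n-y)\|\to 0$, hence
\[
\liminf_{n}\|x_n-(Tx+y)\|=\liminf_{n}\|Tx_n-Tx\|\le \liminf_{n}\|x_n-x\|,
\]
which contradicts Opial's inequality applied to $x_n\rightharpoonup x$ with $z=Tx+y\neq x$. I would replace the exploratory paragraph by this two-line argument; otherwise the proposal is correct.
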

\noindent Since $C$ is a nonempty closed and convex subset of $H$, for every $x\in H$, there exists a unique element $P_C x,$ defined by
$$
P_C x=\arg\min\left\{\left\|y-x\right\|:y\in C\right\}.
$$
The mapping $P_C:H\to C$ is called the metric (orthogonal) projection of $H$ onto $C$. It is also known that $P_C$ is firmly 
nonexpansive, or 
$1$-inverse strongly monotone ($1$-ism), i.e., 
$$
\left\langle P_C x-P_C y,x-y \right\rangle \ge \left\|P_C x-P_C y\right\|^2.
$$
Besides, we have
\begin{equation}\label{eq:ProperOfPC}
\left\|x-P_C y\right\|^2+\left\|P_C y-y\right\|^2\le \left\|x-y\right\|^2.
\end{equation}
Moreover, $z=P_C x$ if and only if 
\begin{equation}\label{eq:EquivalentPC}
\left\langle x-z,z-y \right\rangle \ge 0,\quad \forall y\in C.
\end{equation}
A function $f: C \times C \to \Re \cup \{+ \infty \},$ where $C \subset H$ is a closed convex 
subset, such that $f(x,x) = 0$ for all $x \in C$  is called a bifunction. Throughout this paper we consider bifunctions with 
the following 
properties: 
\begin{itemize}
\item[A1.] $f$ is pseudomonotone, i.e., for all $x,y\in C$,
$$ f(x, y) \geq 0 \Rightarrow  f(y,x) \leq 0; $$
\item [A2.]  $f$ is Lipschitz-type continuous, i.e., there exist two positive constants $c_1,c_2$ such that
$$ f(x,y) + f(y,z) \geq f(x,z) - c_1||x-y||^2 - c_2||y-z||^2, \quad \forall x,y,z \in C;$$
\item [A3.]   $f$ is weakly continuous on $C\times C$;
\item [A4.]  $f(x,.)$ is convex and subdifferentiable on $C$  for every fixed $x\in C.$
\end{itemize}  
A bifunction $f$ is called monotone on $C$ if for all $x,y\in C, \quad f(x,y)+f(y,x)\le 0.$  It is obvious that any monotone bifunction  is
a pseudomonotone one, but not vice versa. Recall that a mapping $A:C\to H$ is pseudomonotone if and only if the bifunction 
$f(x,y)=\left\langle A(x), y-x\right\rangle$ is pseudomonotone on $C.$\\
The following statements will be needed in the next section.
\begin{lemma}\label{EPexistence}\cite{BS1996}
If the bifunction $f$ satisfies Assumptions $A1-A4$, then the solution set $EP(f)$ is weakly closed and convex.
\end{lemma}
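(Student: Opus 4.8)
The plan is to identify $EP(f)$ with the solution set of the associated \emph{Minty (dual) equilibrium problem}
$$EP^d(f):=\bigl\{x\in C:\ f(y,x)\le 0\ \ \forall y\in C\bigr\},$$
and then read off both properties from the structure of $EP^d(f)$ as an intersection of weakly closed convex sets. The inclusion $EP(f)\subseteq EP^d(f)$ is immediate from the pseudomonotonicity assumption A1: if $f(x^*,y)\ge 0$ for all $y\in C$, then $f(y,x^*)\le 0$ for all $y\in C$.

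The reverse inclusion $EP^d(f)\subseteq EP(f)$ is the Minty-type argument, and this is the only step requiring real work. Let $x^*\in EP^d(f)$ and fix $y\in C$. For $t\in(0,1]$ set $y_t:=ty+(1-t)x^*\in C$. Since $f(y_t,x^*)\le 0$ and, by A4, $f(y_t,\cdot)$ is convex with $f(y_t,y_t)=0$, convexity gives
$$0=f(y_t,y_t)\le t\,f(y_t,y)+(1-t)\,f(y_t,x^*)\le t\,f(y_t,y),$$
so $f(y_t,y)\ge 0$ for every $t\in(0,1]$. Now $y_t\to x^*$ in norm as $t\to 0^+$, hence $(y_t,y)\to(x^*,y)$ in $C\times C$, and the continuity assumption A3 (weak continuity, a fortiori norm continuity) yields $f(y_t,y)\to f(x^*,y)$; therefore $f(x^*,y)\ge 0$. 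Since $y\in C$ was arbitrary, $x^*\in EP(f)$, which proves $EP(f)=EP^d(f)$.

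It remains to note that for each fixed $y\in C$ the sublevel set $L_y:=\{x\in C:\ f(y,x)\le 0\}$ is convex (because $f(y,\cdot)$ is convex by A4 and $C$ is convex) and weakly closed: if $x_n\in L_y$ and $x_n\rightharpoonup x$, then $x\in C$ since $C$ is closed and convex, hence weakly closed, and $f(y,x_n)\to f(y,x)$ by A3, so $f(y,x)\le 0$. Consequently
$$EP(f)=EP^d(f)=\bigcap_{y\in C}L_y$$
is an intersection of weakly closed convex sets, hence weakly closed and convex. The main obstacle is the passage to the limit $t\to 0^+$ in the Minty step, where one must invoke the continuity hypothesis A3 to transfer $f(y_t,y)\ge 0$ into $f(x^*,y)\ge 0$; everything else is routine once the identity $EP(f)=EP^d(f)$ is established.
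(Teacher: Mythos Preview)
The paper does not supply its own proof of this lemma; it is stated with a citation to Bianchi and Schaible \cite{BS1996} and used as a black box. Your argument is correct and is in fact the standard route to this result: identify $EP(f)$ with the dual (Minty) solution set $EP^d(f)=\{x\in C:f(y,x)\le 0\ \forall y\in C\}$ via pseudomonotonicity for one inclusion and the convex-combination (Minty linearization) trick for the other, then observe that $EP^d(f)$ is an intersection of weakly closed convex sublevel sets. This is essentially the proof one finds in the cited reference, so there is nothing to contrast.
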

\begin{lemma}\cite{DGM2003}\label{lem.Equivalent_MinPro}
Let $C$ be a convex subset of a real Hilbert space H and $g:C\to \Re$ be a convex and subdifferentiable function on $C$. Then, 
$x^*$ is a solution to the following convex problem
$$
\min\left\{g(x):x\in C\right\}
$$
if and only if  ~  $0\in \partial g(x^*)+N_C(x^*)$, where $\partial g(.)$ denotes the subdifferential of $g$ and $N_C(x^*)$ is the normal cone 
of $C$ at $x^*$.
\end{lemma}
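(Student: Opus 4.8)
The plan is to reduce the constrained program to an unconstrained one and then split the resulting optimality condition by a subdifferential sum rule. Let $\delta_C$ denote the indicator function of $C$, i.e. $\delta_C(x)=0$ for $x\in C$ and $\delta_C(x)=+\infty$ otherwise; it is proper and convex, and directly from the definitions its subdifferential at a point $x^*\in C$ is precisely the normal cone $N_C(x^*)=\{w\in H:\langle w,x-x^*\rangle\le 0\ \forall x\in C\}$. Setting $h:=g+\delta_C$, the point $x^*$ solves $\min\{g(x):x\in C\}$ if and only if $x^*$ is a global minimizer of $h$ over all of $H$, and by Fermat's rule for convex functions the latter holds if and only if $0\in\partial h(x^*)$ --- both implications being immediate, since ``$h(x)\ge h(x^*)+\langle 0,x-x^*\rangle$ for all $x$'' is just a restatement of global minimality.

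It then remains to identify $\partial h(x^*)$ with $\partial g(x^*)+N_C(x^*)$. The inclusion $\partial g(x^*)+N_C(x^*)\subseteq\partial h(x^*)$ is elementary: if $v\in\partial g(x^*)$ and $w\in N_C(x^*)$, then for $x\in C$ we have $h(x)=g(x)\ge g(x^*)+\langle v,x-x^*\rangle\ge h(x^*)+\langle v+w,x-x^*\rangle$ (using $\langle w,x-x^*\rangle\le 0$ and $h(x^*)=g(x^*)$), while the inequality is trivial for $x\notin C$; hence $v+w\in\partial h(x^*)$. This inclusion alone already yields the ``if'' half of the lemma without any further machinery: if $0\in\partial g(x^*)+N_C(x^*)$, pick $v\in\partial g(x^*)$ with $-v\in N_C(x^*)$; then $\langle v,x-x^*\rangle\ge 0$ on $C$, so $g(x)\ge g(x^*)+\langle v,x-x^*\rangle\ge g(x^*)$ for every $x\in C$.

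The ``only if'' half requires the reverse inclusion $\partial h(x^*)\subseteq\partial g(x^*)+N_C(x^*)$, which is the Moreau--Rockafellar sum rule, and this is where the genuine work sits. It is proved by a Hahn--Banach separation argument (separating the epigraph of $g$ from an appropriate supporting cone of $C$ in $H\times\Re$) and needs a qualification condition; here the condition is met because $g$ is assumed finite-valued and subdifferentiable on $C$ --- equivalently, one may take $g$ continuous at a point of $C$, so that $\mathrm{int}(\mathrm{dom}\,g)\cap C\neq\emptyset$. I expect this separation/qualification step to be the main obstacle; the rest is bookkeeping. Since the result is classical, for this part I would simply invoke \cite{DGM2003} rather than reproduce the separation argument in detail.
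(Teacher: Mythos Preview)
The paper does not prove this lemma at all: it is quoted from \cite{DGM2003} as a preliminary fact, with no argument given. So there is nothing to compare against on the paper's side.

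Your outline is the standard and correct one. Reducing to the unconstrained problem via $h=g+\delta_C$, applying Fermat's rule, and then splitting $\partial h(x^*)$ by the Moreau--Rockafellar sum rule is exactly how this result is established in convex analysis. You have also correctly isolated where the real content lies: the inclusion $\partial g(x^*)+N_C(x^*)\subseteq\partial(g+\delta_C)(x^*)$ is trivial and already delivers the ``if'' direction, while the reverse inclusion needs a qualification condition and a separation argument. One minor caution: the hypothesis ``convex and subdifferentiable on $C$'' does not by itself guarantee continuity at an interior point of $C$, and in fact $C$ need not have interior; so the cleanest route is indeed to defer to the cited reference (or to a standard source such as Ekeland--Temam or Z\u{a}linescu) for the sum rule under the stated hypotheses, exactly as you propose.
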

\begin{lemma}\cite{PU2008}\label{lem:PU2008}
Let $X$ be a uniformly convex Banach space, $r$ be a positive number and $B_r(0) \subset X $ be a closed ball with center at origin and the
radius $r$. Then, for any given subset 
$\left\{x_1,x_2,\ldots,x_N\right\}\subset B_r(0)$ and for any positive numbers $\lambda_1,\lambda_2,\ldots,\lambda_N$ with 
$\sum_{i=1}^N\lambda_i=1$, there exists  a continuous, strictly increasing, and convex function $g:[0,2r)\to [0,\infty)$ with $g(0)=0$ 
such that, for any $i,j\in \left\{1,2,\ldots,N\right\}$ with $i<j$,
$$
\left\|\sum_{k=1}^N\lambda_kx_k\right\|^2\le\sum_{k=1}^N \lambda_k \left\|x_k \right\|^2-\lambda_i\lambda_j g(||x_i-x_j||).
$$
\end{lemma}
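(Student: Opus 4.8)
The plan is to reduce this $N$-term inequality to the classical two-term estimate of Xu together with the convexity of the squared norm. Recall Xu's inequality in a uniformly convex Banach space $X$ (this is the one place where uniform convexity is genuinely used): for every $r>0$ there is a continuous, strictly increasing, convex function $g:[0,2r)\to[0,\infty)$ with $g(0)=0$ such that
\[
\|\mu u+(1-\mu)v\|^2\le \mu\|u\|^2+(1-\mu)\|v\|^2-\mu(1-\mu)\,g(\|u-v\|)\qquad(u,v\in B_r(0),\ \mu\in[0,1]).
\]
I would quote this rather than re-derive it, since its proof via the modulus of convexity of $X$ is standard. I claim that the very same $g$ serves for the lemma, uniformly in the chosen pair $i<j$.

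Fix $i<j$, put $\lambda:=\lambda_i+\lambda_j\in(0,1]$, $\mu:=\lambda_i/\lambda$ (so $1-\mu=\lambda_j/\lambda$), and set $z:=\mu x_i+(1-\mu)x_j$, which lies in $B_r(0)$ by convexity of the ball. Grouping the $i$-th and $j$-th terms, $\sum_{k=1}^N\lambda_k x_k=\lambda z+\sum_{k\ne i,j}\lambda_k x_k$ is a convex combination (the coefficients $\lambda$ and $\{\lambda_k\}_{k\ne i,j}$ sum to $1$), so Jensen's inequality for $t\mapsto\|t\|^2$ gives
\[
\left\|\sum_{k=1}^N\lambda_k x_k\right\|^2\le \lambda\|z\|^2+\sum_{k\ne i,j}\lambda_k\|x_k\|^2 .
\]
Applying Xu's inequality to $u=x_i$, $v=x_j$ and multiplying through by $\lambda$,
\[
\lambda\|z\|^2\le \lambda_i\|x_i\|^2+\lambda_j\|x_j\|^2-\frac{\lambda_i\lambda_j}{\lambda_i+\lambda_j}\,g(\|x_i-x_j\|)\le \lambda_i\|x_i\|^2+\lambda_j\|x_j\|^2-\lambda_i\lambda_j\,g(\|x_i-x_j\|),
\]
where the last step uses $g\ge0$ and $\lambda_i+\lambda_j\le\sum_{k=1}^N\lambda_k=1$. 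Substituting this back and regrouping the $\|x_k\|^2$ terms yields exactly the asserted bound.

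\textbf{Where the work is.} Essentially all the content sits in Xu's two-term inequality; the passage to $N$ terms is just the grouping argument above, whose only non-obvious point is the elementary estimate $\lambda_i\lambda_j/(\lambda_i+\lambda_j)\ge\lambda_i\lambda_j$ that replaces the natural coefficient by the cleaner $\lambda_i\lambda_j$. One harmless technicality I would dispose of in a line: two points on the boundary of the \emph{closed} ball can be at distance exactly $2r$, which lies outside the half-open domain $[0,2r)$ of $g$; one simply runs the argument in a ball of slightly larger radius, or extends $g$ monotonically past $2r$.
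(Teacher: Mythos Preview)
Your argument is correct, and it is the standard reduction to Xu's two-point inequality in uniformly convex spaces. Note, however, that in the paper this lemma is not proved at all: it is quoted verbatim from \cite{PU2008} (see the citation attached to the lemma environment), so there is no in-paper proof to compare against. Your write-up is essentially the proof one finds in that reference and in related sources (e.g., Chang--Kim--Wang), so in that sense the approaches coincide.
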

\section{Main results}
In this section, we propose three novel parallel hybrid extragradient algorithms for finding a common element of the set of solutions of 
equilibrium problems for pseudomonotone bifunctions $\left\{f_i\right\}_{i=1}^N$ and the set of fixed points of  nonexpansive 
mappings $\left\{S_j\right\}_{j=1}^M$ in a real Hilbert space $H$. \\
In what follows, we assume that the solution set $$F=\left(\cap_{i=1}^N EP(f_i)\right)\bigcap \left(\cap_{j=1}^M F(S_j)\right)$$ is 
nonempty and each bifunction $f_i ~ (i=1,\ldots, N)$ satisfies all the conditions $A1-A4.$ \\
Observe that we can choose the same 
Lipschitz coefficients $\{c_1, c_2\}$ for all bifunctions $f_i,   i=1,\ldots,N.$ Indeed, condition $A2$ implies that
$f_i(x,z) - f_i(x,y) - f_i(y,z) \leq c_{1,i}||x-y||^2 + c_{2,i}||y-z||^2 \leq c_1||x-y||^2 + c_2||y-z||^2, $ where  $c_1 = \max \left\{c_{1,i}:i=1,\ldots,N\right\}$ and 
$c_2 =\max \left\{c_{2,i}:i=1,\ldots,N\right\}.$ Hence,  $ f_i(x,y) + f_i(y,z) \geq f_i(x,z) - c_1||x-y||^2 - c_2||y-z||^2.$\\
Further, since $F \ne \emptyset$, by Lemmas \ref{lem.demiclose}, \ref{EPexistence}, the sets $F(S_j) ~ j=1,\ldots, M$  and $EP(f_i) ~  i=1,\ldots, N$ are nonempty, 
closed and convex, hence
the solution set $F$ is a nonempty closed and convex subset of $C$. Thus,  given any fixed element $x^0 \in C$ there exists a unique 
element $x^\dagger := P_F(x^0)$.
\begin{algorithm}\label{algorithm1}{\rm (Parallel Hybrid Mann-extragradient method)}\\
\noindent \textbf{Initialization} $x^0 \in C, 0< \rho <\min \left(\frac{1}{2c_1},\frac{1}{2c_2}\right),~  n := 0$ and the sequence 
$\left\{\alpha_k\right\} 
\subset (0,1)$ satisfies the condition $\lim\sup_{k\to\infty}\alpha_k<1$.\\
\textbf{Step 1.} Solve $N$ strongly convex programs in parallel
$$y_n^i = {\rm argmin} \{ \rho f_i(x_n, y) +\frac{1}{2}||x_n-y||^2:  y \in C\} \quad i=1,\ldots,N.$$
\textbf{Step 2.} Solve $N$ strongly convex programs in parallel
 $$ z_n^i = {\rm argmin} \{ \rho f_i(y_n^i, y) +\frac{1}{2}||x_n-y||^2:  y \in C\} \quad i=1,\ldots,N. $$
\textbf{Step 3.} Find among $z_n^i, \quad i =1,\ldots,N,$ the farthest element from $x_n$, i.e., 
$$ i_n = {\rm argmax}\{||z_n^i - x_n||: i =1,\ldots,N\},\bar{z}_n:=z^{i_n}_n. $$
\textbf{Step 4.} Find intermediate approximations $u_n^j$ in parallel
$$ u_n^j=\alpha_n x_n+(1-\alpha_n)S_j \bar{z}_n, j=1,\ldots,M. $$
\textbf{Step 5.} Find among $u_n^j, \quad j =1,\ldots,M,$ the farthest element from $x_n$, i.e., 
$$ j_n= {\rm argmax}\{||u_n^j - x_n||: j =1,\ldots,M\},\bar{u}_n:=u^{j_n}_n. $$
\textbf{Step 6.} Construct two closed convex subsets of C
\begin{eqnarray*}
&&C_n = \{v \in C: ||\bar{u}_n - v||\leq ||x_n-v||\},\\ 
&&  Q_n = \{ v \in C:  \langle x_0 - x_n, v - x_n \rangle \leq 0 \}.
\end{eqnarray*}
\textbf{Step 7.} The next approximation $x_{n+1}$ is defined as the projection of $x_0$ onto $C_n \cap Q_n$, i.e.,
$$ x_{n+1}= P_{C_n \cap Q_n}(x_0). $$
\textbf{Step 8.} If $x_{n+1}=x_n$ then stop. Otherwise, set $n:=n+1$ and go to \textbf{Step 1}.
\end{algorithm}
For establishing the strong convergence of  Algorithm $\ref{algorithm1}$, we need the following results. 
\begin{lemma}\label{lem.help1}\cite{A2011,QMH2008}
Suppose that $x^*\in EP(f_i) ,$ and $x_n,  y^i_n, z^i_n, ~  i=1,\ldots, N, $ are defined as in Step 1 and  Step 2 of Algorithm $\ref{algorithm1}$. Then
\begin{equation}\label{eq:1}
||z_n^i-x^*||^2\le ||x_n-x^*||^2-(1-2\rho c_1)||y_n^i-x_n||^2-(1-2\rho c_2)||y_n^i-z_n^i||^2.
\end{equation}
\end{lemma}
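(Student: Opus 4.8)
The plan is to derive the estimate from the optimality (variational) characterizations of the two strongly convex subproblems in Steps~1 and~2, and then to feed in pseudomonotonicity (A1) and Lipschitz-type continuity (A2) of $f_i$. Fix the index $i$ and abbreviate $x=x_n$, $y=y_n^i$, $z=z_n^i$. Since $y$ minimizes the convex, subdifferentiable function $t\mapsto \rho f_i(x,t)+\tfrac12\|x-t\|^2$ over $C$, Lemma~\ref{lem.Equivalent_MinPro} together with the subdifferential sum rule gives a subgradient $w\in\partial_2 f_i(x,y)$ (the subdifferential of $f_i(x,\cdot)$ at $y$) and a vector $p\in N_C(y)$ with $\rho w+(y-x)+p=0$. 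Using $\langle p,t-y\rangle\le 0$ for all $t\in C$ and the subgradient inequality $f_i(x,t)-f_i(x,y)\ge\langle w,t-y\rangle$, one obtains
\[
\rho\bigl(f_i(x,t)-f_i(x,y)\bigr)\ge\langle x-y,\,t-y\rangle\qquad\forall t\in C .
\]
Applying the same reasoning to the Step~2 program, whose objective is $t\mapsto\rho f_i(y_n^i,t)+\tfrac12\|x_n-t\|^2$, yields
\[
\rho\bigl(f_i(y,t)-f_i(y,z)\bigr)\ge\langle x-z,\,t-z\rangle\qquad\forall t\in C .
\]

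Next I would specialize the second inequality at $t=x^*$. Since $x^*\in EP(f_i)$ we have $f_i(x^*,y)\ge 0$, so A1 forces $f_i(y,x^*)\le 0$, and the second inequality collapses to $\langle x-z,\,z-x^*\rangle\ge\rho f_i(y,z)$. To bound the right-hand side from below I would apply A2 to the triple $(x,y,z)$, namely $f_i(y,z)\ge f_i(x,z)-f_i(x,y)-c_1\|x-y\|^2-c_2\|y-z\|^2$, and then use the first inequality with $t=z$, i.e. $\rho\bigl(f_i(x,z)-f_i(x,y)\bigr)\ge\langle x-y,\,z-y\rangle$. Combining these gives
\[
\langle x-z,\,z-x^*\rangle\ge\langle x-y,\,z-y\rangle-\rho c_1\|x-y\|^2-\rho c_2\|y-z\|^2 .
\]

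Finally I would convert inner products into norms. From the identity $\|z-x^*\|^2=\|x-x^*\|^2-\|x-z\|^2-2\langle x-z,\,z-x^*\rangle$ and the previous display,
\[
\|z-x^*\|^2\le\|x-x^*\|^2-\|x-z\|^2-2\langle x-y,\,z-y\rangle+2\rho c_1\|x-y\|^2+2\rho c_2\|y-z\|^2 ,
\]
and since $\|x-z\|^2=\|x-y\|^2+2\langle x-y,\,y-z\rangle+\|y-z\|^2$ one has $-\|x-z\|^2-2\langle x-y,\,z-y\rangle=-\|x-y\|^2-\|y-z\|^2$. Substituting this identity produces exactly $\|z-x^*\|^2\le\|x-x^*\|^2-(1-2\rho c_1)\|x-y\|^2-(1-2\rho c_2)\|y-z\|^2$, which is $(\ref{eq:1})$ after restoring the subscripts.

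The only genuinely delicate point is the derivation of the two variational inequalities — justifying the subdifferential sum rule and passing from the minimization to the inequality via the normal-cone description in Lemma~\ref{lem.Equivalent_MinPro} — together with getting the sign right at the one place where pseudomonotonicity is invoked; everything after that is routine manipulation of parallelogram-type identities in a Hilbert space. Note that positivity of the coefficients $1-2\rho c_1$ and $1-2\rho c_2$ is not needed for the statement, only later when the lemma is applied, and it is ensured by the range of $\rho$ fixed in the Initialization step.
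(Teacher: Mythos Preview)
Your proof is correct and complete. The paper itself does not prove this lemma; it simply cites the references \cite{A2011,QMH2008}, and the argument you have written is precisely the standard one from those sources. In fact the paper reproduces the first of your two optimality inequalities later, in the proof of Lemma~\ref{lem.weakly_limit_point} (see equations (\ref{eq:10})--(\ref{eq:13})), so your derivation is fully consistent with the paper's own toolkit.
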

\begin{lemma}\label{lem:CQ_closedconvex}
If Algorithm $\ref{algorithm1}$ reaches a step $n\ge 0$, then $F\subset C_n \cap Q_n$ and $x_{n+1}$ is well-defined.
\end{lemma}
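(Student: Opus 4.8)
The plan is to proceed by induction on $n$, showing simultaneously that (reaching) step $n$ forces $F\subset C_n\cap Q_n$, and that this makes the projection in Step~7 meaningful so the algorithm can reach step $n+1$. First I would dispose of the structural claim that $C_n$ and $Q_n$ are closed and convex. For $C_n$, rewriting the defining inequality $\|\bar u_n-v\|\le\|x_n-v\|$ as $2\langle x_n-\bar u_n,v\rangle\le\|x_n\|^2-\|\bar u_n\|^2$ exhibits $C_n$ as the intersection of $C$ with a closed half-space; $Q_n$ is given in exactly that form. Hence $C_n\cap Q_n$ is closed and convex, and once it is known to be nonempty, the existence and uniqueness of $x_{n+1}=P_{C_n\cap Q_n}(x_0)$ is immediate from the projection theorem recalled in Section~\ref{Preli}.

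The heart of the argument is the inclusion $F\subset C_n$, which holds at every step the algorithm reaches, with no induction needed. Fix $x^*\in F$. Since $x^*\in EP(f_i)$ for each $i=1,\dots,N$, Lemma~\ref{lem.help1} applied to the points $x_n,y_n^i,z_n^i$ from Steps~1--2 gives
\[
\|z_n^i-x^*\|^2\le\|x_n-x^*\|^2-(1-2\rho c_1)\|y_n^i-x_n\|^2-(1-2\rho c_2)\|y_n^i-z_n^i\|^2 ,
\]
and because $0<\rho<\min\left(\frac{1}{2c_1},\frac{1}{2c_2}\right)$ both coefficients $1-2\rho c_1$ and $1-2\rho c_2$ are positive, so $\|z_n^i-x^*\|\le\|x_n-x^*\|$ for every $i$; in particular $\|\bar z_n-x^*\|\le\|x_n-x^*\|$. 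Then, for each $j=1,\dots,M$, using $x^*\in F(S_j)$ and the nonexpansiveness of $S_j$,
\[
\|u_n^j-x^*\|=\|\alpha_n(x_n-x^*)+(1-\alpha_n)(S_j\bar z_n-S_jx^*)\|\le\alpha_n\|x_n-x^*\|+(1-\alpha_n)\|\bar z_n-x^*\|\le\|x_n-x^*\| ,
\]
whence $\|\bar u_n-x^*\|\le\|x_n-x^*\|$, that is, $x^*\in C_n$. Thus $F\subset C_n$.

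For $F\subset Q_n$ I would argue by induction. When $n=0$ we have $Q_0=C\supseteq F$ by hypothesis. Assume $F\subset Q_n$; together with the inclusion $F\subset C_n$ just established, $C_n\cap Q_n$ is a nonempty closed convex set, so $x_{n+1}=P_{C_n\cap Q_n}(x_0)$ is well-defined, and the characterization $(\ref{eq:EquivalentPC})$ of the metric projection gives $\langle x_0-x_{n+1},x_{n+1}-v\rangle\ge 0$ for every $v\in C_n\cap Q_n$, in particular for every $v\in F$. This is precisely the inequality defining membership in $Q_{n+1}$, so $F\subset Q_{n+1}$, which closes the induction. Combining with $F\subset C_n$ yields $F\subset C_n\cap Q_n$ for all $n\ge 0$, and, as noted, the well-definedness of $x_{n+1}$.

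I do not anticipate a genuine obstacle: the argument is the standard bookkeeping of the hybrid (CQ) scheme. The only point that needs attention is the choice of the step-size: one must keep $\rho$ small enough that $1-2\rho c_1$ and $1-2\rho c_2$ stay nonnegative, since that is exactly what lets Lemma~\ref{lem.help1} deliver $\|\bar z_n-x^*\|\le\|x_n-x^*\|$ and hence drive the inclusion $F\subset C_n$; the remaining estimates are the routine triangle-inequality and projection manipulations indicated above.
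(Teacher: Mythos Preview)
Your proof is correct and follows essentially the same route as the paper: show $C_n,Q_n$ are closed convex half-space sections of $C$, obtain $F\subset C_n$ from Lemma~\ref{lem.help1} combined with the nonexpansiveness of $S_j$, and then get $F\subset Q_n$ by induction via the variational characterization~(\ref{eq:EquivalentPC}) of the projection. The only cosmetic difference is that you bound $\|u_n^j-x^*\|$ with the triangle inequality on norms, whereas the paper uses convexity of $\|\cdot\|^2$; both yield the needed estimate.
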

\begin{proof} As mentioned above, the solution set $F$ is closed and convex. Further, by definitions, $C_n$ and $Q_n$ are the 
intersections of halfspaces with the closed convex subset $C$, hence they are closed and convex.\\
Next, we verify that $F\subset C_n\bigcap Q_n$ for all $n\ge 0$. For every $x^*\in F$, by the convexity of $||.||^2$, the nonexpansiveness 
of $S_j,$ and Lemma $\ref{lem.help1}$, we have
\begin{eqnarray}
||\bar{u}_n-x^*||^2&&=||\alpha_n x_n+(1-\alpha_n)S_{j_n} \bar{z}_n-x^*||^2 \nonumber\\
&&\le \alpha_n ||x_n-x^*||^2+(1-\alpha_n)||S_{j_n} \bar{z}_n-x^*||^2 \nonumber\\
&&\le \alpha_n||x_n-x^*||^2+(1-\alpha_n)||\bar{z}_n-x^*||^2\nonumber\\
&&\le \alpha_n||x_n-x^*||^2+(1-\alpha_n)||x_n-x^*||^2\nonumber\\
&&\le ||x_n-x^*||^2. \label{eq:2}
\end{eqnarray}
Therefore, $||\bar{u}_n-x^*||\le ||x_n-x^*||$ or $x^*\in C_n$. Hence $F\subset C_n$ for all $n\ge 0$.\\
 Now we show that $F\subset C_n\bigcap Q_n$ by induction. Indeed, we have $F\subset C_0$ as above. Besides, $F \subset C = Q_0,$ 
hence $F \subset C_0\bigcap Q_0.$   Assume that $F\subset C_{n-1}\bigcap Q_{n-1}$ for some $n\ge 1$. From 
$x_{n}=P_{C_{n-1}\bigcap Q_{n-1}}x_0$ and  $(\ref{eq:EquivalentPC})$, we get
$$
\left\langle x_{n}-z,x_0 -x_{n}\right\rangle \ge 0, \forall z\in C_{n-1} \bigcap Q_{n-1}.
$$
Since $F \subset C_{n-1} \bigcap Q_{n-1}$, $\left\langle x_{n}-z,x_0 -x_{n}\right\rangle \ge 0$ for all $z\in F$. This together with the 
definition of $Q_{n}$ implies that $F \subset Q_{n}$. Hence $F \subset C_{n} \bigcap Q_{n}$ for all $n\ge 1$. Since $F$ and $C_n \cap 
Q_n$ are nonempty closed convex subsets, $P_F x_0$ and $x_{n+1}:=P_{C_n \cap Q_n}(x_0)$ are well-defined.
\end{proof}
\begin{lemma}\label{lem:FiniteIteration}
If Algorithm $\ref{algorithm1}$ finishes at a finite iteration $n<\infty$, then $x_n$ is a common element of two sets $\cap_{i=1}^N 
EP(f_i)$ and $\cap_{j=1}^M F(S_j)$, i.e., $x_n\in F$.
\end{lemma}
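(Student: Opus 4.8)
The plan is to exploit the stopping rule $x_{n+1}=x_n$ from Step 8 together with the definitions of $C_n$, $Q_n$ and of the two ``farthest element'' selections, so as to force every intermediate iterate at level $n$ to collapse onto $x_n$, and then to read off from the optimality conditions of Steps 1--2 and from the averaging in Step 4 that $x_n$ solves each $EP(f_i)$ and is fixed by each $S_j$.

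First I would note that $x_{n+1}=P_{C_n\cap Q_n}(x_0)$ always lies in $C_n$ (it is well-defined and belongs to $C_n\cap Q_n$ by Lemma $\ref{lem:CQ_closedconvex}$), so $x_{n+1}=x_n$ yields $x_n\in C_n$, i.e. $\|\bar u_n-x_n\|\le\|x_n-x_n\|=0$, whence $\bar u_n=x_n$. Since, by Step 5, $\bar u_n=u_n^{j_n}$ is the farthest of the $u_n^j$ from $x_n$, this forces $u_n^j=x_n$ for every $j=1,\ldots,M$; recalling $u_n^j=\alpha_n x_n+(1-\alpha_n)S_j\bar z_n$ with $\alpha_n\in(0,1)$, we obtain $S_j\bar z_n=x_n$ for all $j$.

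The key step is to show $\bar z_n=x_n$. Take any $x^*\in F$ (possible since $F\ne\emptyset$); then $x^*\in EP(f_i)$ for each $i$. Substituting $\bar u_n=x_n$ into the chain $(\ref{eq:2})$ and cancelling $1-\alpha_n>0$ gives $\|x_n-x^*\|^2\le\|\bar z_n-x^*\|^2$, while Lemma $\ref{lem.help1}$ with $i=i_n$ gives
$$\|\bar z_n-x^*\|^2\le\|x_n-x^*\|^2-(1-2\rho c_1)\|y_n^{i_n}-x_n\|^2-(1-2\rho c_2)\|y_n^{i_n}-z_n^{i_n}\|^2.$$
Comparing the two and using $1-2\rho c_1>0$ and $1-2\rho c_2>0$ --- which is exactly where the choice $0<\rho<\min(1/2c_1,1/2c_2)$ enters --- forces $y_n^{i_n}=x_n$ and $y_n^{i_n}=z_n^{i_n}$, hence $\bar z_n=z_n^{i_n}=x_n$. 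Since by Step 3 $\bar z_n=z_n^{i_n}$ is the farthest of the $z_n^i$ from $x_n$, we get $z_n^i=x_n$ for all $i$, and feeding $z_n^i=x_n$ back into Lemma $\ref{lem.help1}$ for each $i$ likewise forces $y_n^i=x_n$ for all $i=1,\ldots,N$.

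Finally I would convert $y_n^i=x_n$ into $x_n\in EP(f_i)$. Since $y_n^i=x_n$ minimizes over $C$ the function $y\mapsto\rho f_i(x_n,y)+\frac12\|x_n-y\|^2$, which is convex and subdifferentiable in $y$ by Assumption $A4$, Lemma $\ref{lem.Equivalent_MinPro}$ provides $w$ in the subdifferential of $f_i(x_n,\cdot)$ at $x_n$ and $v\in N_C(x_n)$ with $\rho w+v=0$ (the subgradient of the quadratic term vanishing at $x_n$). The subgradient inequality and $f_i(x_n,x_n)=0$ then give, for all $y\in C$, $f_i(x_n,y)\ge\langle w,y-x_n\rangle=-\frac1\rho\langle v,y-x_n\rangle\ge0$, so $x_n\in EP(f_i)$; as $i$ was arbitrary, $x_n\in\cap_{i=1}^N EP(f_i)$. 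Together with $S_j x_n=S_j\bar z_n=x_n$ from the second paragraph, this gives $x_n\in\cap_{j=1}^M F(S_j)$ as well, hence $x_n\in F$. I expect the main obstacle to be the saturation argument in the third paragraph --- keeping careful track of which of the chained inequalities become equalities and invoking the strict positivity secured by the restriction on $\rho$ --- and, to a lesser extent, the correct handling of Lemma $\ref{lem.Equivalent_MinPro}$ with the normal-cone and subgradient inequalities in the last step; everything else is routine bookkeeping with the two ``argmax'' selections.
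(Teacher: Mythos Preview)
Your proof is correct and follows essentially the same route as the paper: both use $x_{n+1}=x_n\in C_n$ to force $\bar u_n=x_n$, then the argmax selections and Lemma~\ref{lem.help1} (with the hypothesis $0<\rho<\min(1/2c_1,1/2c_2)$) to collapse all intermediate iterates onto $x_n$. The only notable difference is cosmetic: the paper obtains $\|x_n-x^*\|\le\|\bar z_n-x^*\|$ directly from $x_n=S_j\bar z_n$ and nonexpansiveness, and then cites \cite[Proposition~2.1]{M2000} for the implication $y_n^i=x_n\Rightarrow x_n\in EP(f_i)$, whereas you recover the first inequality from the chain~\eqref{eq:2} and spell out the second via Lemma~\ref{lem.Equivalent_MinPro}; both choices are entirely sound.
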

\begin{proof}
If $x_{n+1}=x_n$ then $x_n = x_{n+1}=P_{C_n\cap Q_n}(x_0) \in C_n$. By the definition of $C_n$, $ ||\bar{u}_n-x_n||\le 
||x_n-x_n||=0,$ hence $\bar{u}_n=x_n$. From the definition of $j_n$, we obtain
$$
u_n^j=x_n, \forall j=1,\ldots, M.
$$
This together with the relations $u_n^j=\alpha_n x_n+(1-\alpha_n)S_j \bar{z}_n$ and $0<\alpha_n<1$ implies that $x_n=S_j \bar{z}_n$. 
Let $x^* \in F.$  By Lemma $\ref{lem.help1}$ and the nonexpansiveness of $S_j$, we get 
\begin{eqnarray*}
||x_n-x^*||^2&&= ||S_j \bar{z}_n-x^*||^2\\ 
&&\le||\bar{z}_n-x^*||^2\\
&&\le ||x_n-x^*||^2-(1-2\rho c_1)||y_n^{i_n}-x_n||^2-(1-2\rho c_2)||y_n^{i_n}-\bar{z}_n||^2.
\end{eqnarray*}
Therefore
$$ (1-2\rho c_1)||y_n^{i_n}-x_n||^2 + (1-2\rho c_2)||y_n^{i_n}-\bar{z}_n||^2\le 0. $$
Since $0<\rho < \min\left\{\frac{1}{2c_1},\frac{1}{2c_2}\right\}$, from the last inequality we obtain $x_n=y_n^{i_n}=\bar{z}_n$. 
Therefore $x_n=S_j \bar{z}_n=S_j x_n$ or $x_n \in F(S_j)$ for all $j=1,\ldots, M$. Moreover, from the relation $x_n=\bar{z}_n$ and 
the definition of $i_n$, we also get $x_n=z_n^i$ for all $i=1,\ldots, N$. This together with the inequality $(\ref{eq:1})$ implies that 
$x_n=y_n^{i}$ for all $i=1,\ldots,N$. Thus,
$$ x_n = {\rm argmin} \{ \rho f_i(x_n, y) +\frac{1}{2}||x_n-y||^2:  y \in C\}.$$
By  \cite[Proposition 2.1]{M2000}, from the last relation we conclude that   $x_n\in EP(f_i)$ for all $i=1,\ldots, N,$  hence $x_n \in F$. Lemma  $\ref{lem:FiniteIteration}$ is proved.
\end{proof}
\begin{lemma}\label{lem.limits}
Let $\left\{x_n\right\}, \left\{y_n^i\right\}, \left\{z_n^i\right\}, \left\{u_n^j\right\}$ be (infinite) sequences generated by Algorithm $\ref{algorithm1}$. Then, there hold the relations 
$$
\lim_{n\to\infty}||x_{n+1}-x_n||=\lim_{n\to\infty}||x_n-u_n^j||=\lim_{n\to\infty}||x_n-z_n^i||=\lim_{n\to\infty}||x_n-y_n^i||=0,
$$
and $\lim_{n\to\infty}||x_n-S_j x_n||=0.$
\end{lemma}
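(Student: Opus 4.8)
The plan is to run the by-now standard monotonicity argument for Haugazeau/CQ-type hybrid schemes and then squeeze the vanishing of consecutive differences down through every auxiliary sequence. First I would show that $\{\|x_0-x_n\|\}$ is nondecreasing and bounded, hence convergent. Since $x_n$ lies on the bounding hyperplane of the halfspace $Q_n$, one checks from $(\ref{eq:EquivalentPC})$ that $x_n=P_{Q_n}(x_0)$; as $C_n\cap Q_n\subseteq Q_n$, this gives $\|x_0-x_n\|=\|x_0-P_{Q_n}x_0\|\le \|x_0-P_{C_n\cap Q_n}x_0\|=\|x_0-x_{n+1}\|$. On the other hand, by Lemma $\ref{lem:CQ_closedconvex}$ we have $F\subseteq C_n\cap Q_n$, so $\|x_0-x_{n+1}\|\le \|x_0-P_Fx_0\|=\|x_0-x^\dagger\|$. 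Thus $\ell:=\lim_n\|x_0-x_n\|$ exists and $\{x_n\}$ is bounded.

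Next I would apply $(\ref{eq:ProperOfPC})$ with $C=Q_n$, $y=x_0$, $x=x_{n+1}\in C_n\cap Q_n\subseteq Q_n$, and $P_{Q_n}x_0=x_n$, to obtain $\|x_{n+1}-x_n\|^2+\|x_n-x_0\|^2\le \|x_{n+1}-x_0\|^2$, whence $\|x_{n+1}-x_n\|^2\le \|x_{n+1}-x_0\|^2-\|x_n-x_0\|^2\to \ell^2-\ell^2=0$. Since $x_{n+1}\in C_n$, the definition of $C_n$ gives $\|\bar u_n-x_{n+1}\|\le \|x_n-x_{n+1}\|$, so $\|\bar u_n-x_n\|\le 2\|x_{n+1}-x_n\|\to 0$, and by the maximality in Step 5, $\|u_n^j-x_n\|\le \|\bar u_n-x_n\|\to 0$ for every $j$. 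From $u_n^j-x_n=(1-\alpha_n)(S_j\bar z_n-x_n)$ and $\limsup_k\alpha_k<1$ (so $1-\alpha_n\ge \delta>0$ for large $n$), it follows that $\|S_j\bar z_n-x_n\|\to 0$ for all $j=1,\ldots,M$.

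To reach $\bar z_n$ and the $y_n^i$, I would combine the convexity/nonexpansiveness estimate already established in the proof of Lemma $\ref{lem:CQ_closedconvex}$, namely $\|\bar u_n-x^*\|^2\le \alpha_n\|x_n-x^*\|^2+(1-\alpha_n)\|\bar z_n-x^*\|^2$, with Lemma $\ref{lem.help1}$ applied to the index $i_n$ and a fixed $x^*\in F$, to get
$$(1-\alpha_n)\big[(1-2\rho c_1)\|y_n^{i_n}-x_n\|^2+(1-2\rho c_2)\|y_n^{i_n}-\bar z_n\|^2\big]\le \|x_n-x^*\|^2-\|\bar u_n-x^*\|^2.$$
The right-hand side tends to $0$ since $\big|\,\|x_n-x^*\|^2-\|\bar u_n-x^*\|^2\,\big|\le \|x_n-\bar u_n\|\,(\|x_n-x^*\|+\|\bar u_n-x^*\|)\to 0$ by boundedness and the previous step. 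Because $1-\alpha_n$ is bounded away from $0$ and $\rho<\min\{1/(2c_1),1/(2c_2)\}$ forces $1-2\rho c_1>0$ and $1-2\rho c_2>0$, we deduce $\|y_n^{i_n}-x_n\|\to 0$ and $\|y_n^{i_n}-\bar z_n\|\to 0$, hence $\|x_n-\bar z_n\|\to 0$. By the maximality in Step 3, $\|z_n^i-x_n\|\le \|\bar z_n-x_n\|\to 0$ for every $i$; and applying Lemma $\ref{lem.help1}$ once more to each individual index $i$, $(1-2\rho c_1)\|y_n^i-x_n\|^2\le \|x_n-x^*\|^2-\|z_n^i-x^*\|^2\to 0$, so $\|y_n^i-x_n\|\to 0$ for all $i$. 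Finally, by nonexpansiveness of $S_j$, $\|x_n-S_jx_n\|\le \|x_n-S_j\bar z_n\|+\|S_j\bar z_n-S_jx_n\|\le \|x_n-S_j\bar z_n\|+\|\bar z_n-x_n\|\to 0$.

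I expect the only genuinely delicate point to be the bookkeeping in the third step: arranging that the Mann-averaging convexity estimate and the extragradient estimate of Lemma $\ref{lem.help1}$ combine so that the coefficients $(1-\alpha_n)(1-2\rho c_1)$ and $(1-\alpha_n)(1-2\rho c_2)$ remain uniformly positive — this is precisely where the hypotheses $\limsup_k\alpha_k<1$ and $0<\rho<\min\{1/(2c_1),1/(2c_2)\}$ are consumed. Everything else is the routine telescoping and triangle-inequality squeezing familiar from hybrid CQ methods, together with the maximality defining $i_n$ and $j_n$, which is what lets a single ``farthest'' estimate control all $N$ (resp. $M$) sequences at once.
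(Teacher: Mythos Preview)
Your proposal is correct and follows essentially the same route as the paper: establish $x_n=P_{Q_n}x_0$, deduce boundedness and the monotone convergence of $\|x_n-x_0\|$, get $\|x_{n+1}-x_n\|\to 0$ via $(\ref{eq:ProperOfPC})$, push this through $C_n$ to $\|\bar u_n-x_n\|\to 0$ and hence $\|u_n^j-x_n\|\to 0$, then combine the Mann convexity estimate with Lemma~$\ref{lem.help1}$ at the index $i_n$ to obtain $\|y_n^{i_n}-x_n\|,\|\bar z_n-x_n\|\to 0$, and finally sweep through all $i$ via the maximality in Step~3 and Lemma~$\ref{lem.help1}$ again. The only cosmetic difference is that you read off $\|S_j\bar z_n-x_n\|\to 0$ directly from $u_n^j-x_n=(1-\alpha_n)(S_j\bar z_n-x_n)$ early on and then close with a straight triangle inequality, whereas the paper postpones this and uses a reverse triangle inequality on $\|u_n^j-x_n\|$ at the end; the content is identical.
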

\begin{proof}
From the definition of $Q_n$ and $(\ref{eq:EquivalentPC})$, we see that  $x_n=P_{Q_n}x_0$. Therefore, for every $u\in 
F\subset Q_n$, we get
\begin{equation}\label{eq:3}
\left\|x_n-x_0 \right\|^2\le \left\|u-x_0 \right\|^2-\left\|u-x_n \right\|^2\le\left\|u-x_0 \right\|^2.
\end{equation}
This implies that the sequence $\left\{x_n\right\}$ is bounded. From $(\ref{eq:2})$, the sequence $\{\bar{u}_n\},$ and hence, the 
sequence $\left\{u_n^j\right\}$ are also bounded.\\  
Observing that $x_{n+1}=P_{C_n\bigcap Q_n}x_0\in Q_n, x_n=P_{Q_n}x_0$, from $(\ref{eq:ProperOfPC})$ we have 
\begin{equation}\label{eq:4}
\left\|x_n-x_0 \right\|^2\le \left\|x_{n+1}-x_0 \right\|^2-\left\|x_{n+1}-x_n \right\|^2\le \left\|x_{n+1}-x_0 \right\|^2.
\end{equation}
Thus, the sequence $\left\{\left\|x_n-x_0 \right\|\right\}$ is nondecreasing, hence there exists the limit of the sequence $\left\{\left\|x_n-x_0 
\right\|\right\}$. From $(\ref{eq:4})$ we obtain
$$
\left\|x_{n+1}-x_n \right\|^2\le \left\|x_{n+1}-x_0 \right\|^2-\left\|x_n-x_0 \right\|^2.
$$
Letting $n\to\infty$, we find
\begin{equation}\label{eq:5}
\lim_{n\to\infty}\left\|x_{n+1}-x_n \right\|=0.
\end{equation}
Since $x_{n+1}\in C_n$, $||\bar{u}_n-x_{n+1}||\le \left\|x_{n+1}-x_n \right\|$. Thus 
$||\bar{u}_n-x_n||\le||\bar{u}_n-x_{n+1}||+||x_{n+1}-x_n||\le 2||x_{n+1}-x_n||$. The last inequality together with $(\ref{eq:5})$ implies
that $||\bar{u}_n-x_n||\to 0$ as $n\to\infty$. From the definition of $j_n$, we conclude that
\begin{equation}\label{eq:6}
\lim_{n\to\infty}\left\|u_n^j-x_n \right\|=0
\end{equation}
 for all $j=1,\ldots, M$. Moreover, Lemma $\ref{lem.help1}$ shows that for any fixed $x^* \in F,$ we have
\begin{eqnarray*}
||u_n^j-x^*||^2&&=||\alpha_n x_n+(1-\alpha_n)S_j\bar{z}_n-x^*||^2\nonumber\\
&&\le \alpha_n ||x_n-x^*||^2+(1-\alpha_n)||S_j\bar{z}_n-x^*||^2\nonumber\\
&&\le \alpha_n ||x_n-x^*||^2+(1-\alpha_n)||\bar{z}_n-x^*||^2\nonumber\\
&&\le ||x_n-x^*||^2\nonumber\\
&&-(1-\alpha_n)||\left((1-2\rho c_1)||y_n^{i_n}-x_n||^2+(1-2\rho c_2)||y_n^{i_n}-\bar{z}_n||^2\right).\nonumber
\end{eqnarray*}
Therefore
\begin{eqnarray}
&&(1-\alpha_n)(1-2\rho c_1)||y_n^{i_n}-x_n||^2+(1-2\rho c_2)||y_n^{i_n}-\bar{z}_n||^2\nonumber\\
&&\le||x_n-x^*||^2-||u_n^{j}-x^*||^2\nonumber\\ 
&&= \left(||x_n-x^*||-||u_n^j-x^*||\right)\left(||x_n-x^*||+||u_n^j-x^*||\right)\nonumber\\
&&\le ||x_n-u_n^j||\left(||x_n-x^*||+||u_n^j-x^*||\right)\label{eq:6*2}.
\end{eqnarray}
Using the last inequality together with $(\ref{eq:6})$ and taking into account the boundedness of two sequences $\left\{u_n^j\right\}$, 
$\left\{x_n\right\}$ as well as the condition $\lim\sup_{n\to\infty}\alpha_n<1,$ we come to the relations
\begin{equation}\label{eq:8}
\lim_{n\to\infty}\left\|y_n^{i_n}-x_n \right\|=\lim_{n\to\infty}\left\|y_n^{i_n}-\bar{z}_n \right\|=0
\end{equation}
for all $i=1,\ldots, N$. From $||\bar{z}_n-x_n||\le||\bar{z}_n-y_n^{i_n}||+||y_n^{i_n}-x_n||$ and $(\ref{eq:8})$, we obtain 
$\lim_{n\to\infty}\left\|\bar{z}_n-x_n \right\|=0$. By the definition of $i_n$, we get
\begin{equation}\label{eq:8*}
\lim_{n\to\infty}\left\|z_n^i-x_n \right\|=0
\end{equation}
for all $i=1,\ldots,N$. From Lemma $\ref{lem.help1}$ and $(\ref{eq:8*})$, arguing similarly to $(\ref{eq:6*2})$ we obtain
\begin{equation}\label{eq:8*1}
\lim_{n\to\infty}\left\|y_n^i-x_n \right\|=0
\end{equation}
for all $i=1,\ldots,N$.
On the other hand, since $u_n^j=\alpha_n x_n+(1-\alpha_n)S_j \bar{z}_n$, we have
\begin{eqnarray*}
||u_n^j-x_n||&&=(1-\alpha_n)||S_j \bar{z}_n-x_n||\\ 
&&=(1-\alpha_n)||(S_j x_n-x_n)+(S_j \bar{z}_n-S_jx_n)||\\ 
&&\ge (1-\alpha_n)\left(||S_j x_n-x_n||-||S_j \bar{z}_n-S_jx_n||\right)\\ 
&&\ge (1-\alpha_n)\left(||S_j x_n-x_n||-||\bar{z}_n-x_n||\right).
\end{eqnarray*}
Therefore 
$$||S_j x_n-x_n||\le||\bar{z}_n-x_n||+\frac{1}{1-\alpha_n}||u_n^j-x_n||.$$
The last inequality together with $(\ref{eq:6}),(\ref{eq:8*})$ and the condition \\
$\lim\sup_{n\to\infty}\alpha_n<1$ implies that
\begin{equation}\label{eq:9}
\lim_{n\to\infty}\left\|S_j x_n-x_n \right\|=0,
\end{equation}
for all $j=1,\ldots,M$. The proof of Lemma $\ref{lem.limits}$ is complete.
\end{proof}
\begin{lemma}\label{lem.weakly_limit_point}
Let $\left\{x_n\right\}$ be the sequence generated by Algorithm $\ref{algorithm1}$. Suppose that $\bar{x}$ is a weak limit point of $\left\{x_n\right\}$. Then $\bar{x}\in F=\left(\bigcap_{i=1}^N EP(f_i)\right)\bigcap \left(\bigcap_{j=1}^M F(S_j)\right)$, i.e., $\bar{x}$ is a common element of the set of solutions of equilibrium problems for bifunctions $\left\{f_i\right\}_{i=1}^N$ and the set of fixed points of nonexpansive mappings $\left\{S_j\right\}_{j=1}^M$.
\end{lemma}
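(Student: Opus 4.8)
The plan is to take an arbitrary subsequence $x_{n_k}\rightharpoonup\bar x$ (this is what "weak limit point" means) and to verify separately that $\bar x$ belongs to every $F(S_j)$ and to every $EP(f_i)$. As a preliminary observation I would note that $x_n\in C$ for all $n$ — the initial point lies in $C$ and every subsequent $x_{n+1}$ is a metric projection onto $C_n\cap Q_n\subset C$ — so that $\bar x\in C$, since $C$, being closed and convex, is weakly closed.

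For the fixed-point part I would invoke Lemma~\ref{lem.limits}, which yields $\|(I-S_j)x_n\|=\|x_n-S_jx_n\|\to 0$ for each $j=1,\dots,M$; combined with $x_{n_k}\rightharpoonup\bar x$ and the demiclosedness of $I-S_j$ from Lemma~\ref{lem.demiclose}$(ii)$, this forces $(I-S_j)\bar x=0$, i.e.\ $\bar x\in\bigcap_{j=1}^M F(S_j)$.

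For the equilibrium part, fix $i$. I would apply Lemma~\ref{lem.Equivalent_MinPro} to the strongly convex program in Step~1 defining $y_n^i$: since the quadratic term is differentiable, there exist $w_n\in\partial f_i(x_n,\cdot)(y_n^i)$ and $\xi_n\in N_C(y_n^i)$ with $\rho w_n+(y_n^i-x_n)+\xi_n=0$. Pairing the normal-cone inequality $\langle\xi_n,y-y_n^i\rangle\le 0$ ($y\in C$) with the subgradient inequality $f_i(x_n,y)-f_i(x_n,y_n^i)\ge\langle w_n,y-y_n^i\rangle$ should give, for all $y\in C$,
$$\rho\big(f_i(x_n,y)-f_i(x_n,y_n^i)\big)\ge\langle x_n-y_n^i,\,y-y_n^i\rangle.$$
Then I would pass to the limit along $n=n_k$: by Lemma~\ref{lem.limits}, $\|x_{n_k}-y_{n_k}^i\|\to 0$, hence $y_{n_k}^i\rightharpoonup\bar x$ as well, the right-hand side tends to $0$ (its first factor vanishes, the second is bounded), and by Assumption~A3 (weak continuity on $C\times C$) we get $f_i(x_{n_k},y)\to f_i(\bar x,y)$ and $f_i(x_{n_k},y_{n_k}^i)\to f_i(\bar x,\bar x)=0$. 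What remains is $\rho f_i(\bar x,y)\ge 0$, and since $\rho>0$, $f_i(\bar x,y)\ge 0$ for all $y\in C$, i.e.\ $\bar x\in EP(f_i)$. As $i$ is arbitrary, $\bar x\in\bigcap_{i=1}^N EP(f_i)$, and combining with the fixed-point part gives $\bar x\in F$.

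I expect the main obstacle to be the passage to the limit in the displayed equilibrium inequality: one must be careful that the auxiliary sequence $y_{n_k}^i$ inherits the weak limit $\bar x$ (which is exactly why Lemma~\ref{lem.limits} is indispensable here) and that the weak continuity~A3 genuinely applies both to $f_i(x_{n_k},y)$ with $y$ fixed and to the "diagonal" term $f_i(x_{n_k},y_{n_k}^i)$. By contrast, the subgradient/normal-cone bookkeeping leading to the displayed inequality and the demiclosedness argument are routine. Note that the second extragradient step $z_n^i$ and Lemma~\ref{lem.help1} play no role in this particular lemma — they are used only in the proof of Lemma~\ref{lem.limits} — so the present statement follows essentially from Lemma~\ref{lem.limits} together with demiclosedness and the optimality characterization of $y_n^i$.
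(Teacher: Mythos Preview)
Your proposal is correct and follows essentially the same route as the paper's own proof: demiclosedness of $I-S_j$ combined with Lemma~\ref{lem.limits} handles the fixed-point part, and the optimality/normal-cone characterization of $y_n^i$ via Lemma~\ref{lem.Equivalent_MinPro}, followed by a passage to the weak limit using $\|x_n-y_n^i\|\to 0$ and Assumption~A3, handles the equilibrium part. Your displayed inequality even has the correct sign on the right-hand side (the paper's version carries a harmless sign slip in the gradient of the quadratic term), and your remarks on $\bar{x}\in C$ and on the irrelevance of $z_n^i$ for this particular lemma are accurate.
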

\begin{proof}
From Lemma $\ref{lem.limits}$ we see that $\left\{x_n\right\}$ is bounded. 
Then there exists a subsequence of $\left\{x_n\right\}$ converging weakly to $\bar{x}$. 
For the sake of simplicity, we denote the weakly convergent subsequence again by $\left\{x_n\right\},$ i.e.,  $x_n\rightharpoonup \bar{x}$. 
From $(\ref{eq:9})$ and the demiclosedness of $I - S_j$, we have $\bar{x}\in F(S_j)$. Hence, $\bar{x}\in \bigcap_{j=1}^M F(S_j)$. 
Noting that
$$ y_n^i = {\rm argmin} \{ \rho f_i(x_n, y) +\frac{1}{2}||x_n-y||^2:  y \in C\}, $$
by Lemma $\ref{lem.Equivalent_MinPro}$, we obtain
$$
0\in \partial_2 \left\{\rho f_i(x_n,y)+\frac{1}{2}||x_n-y||^2\right\}(y_n^i)+N_C(y_n^i).
$$
Therefore, there exist $w\in \partial_2 f_i (x_n,y_n^i)$ and $\bar{w}\in N_C(y_n^i)$ such that
\begin{equation}\label{eq:10}
\rho w+x_n-y_n^i+\bar{w}=0.
\end{equation}
Since $\bar{w}\in N_C(y_n^i)$, $\left\langle \bar{w}, y-y_n^i\right\rangle \le 0$ for all $y\in C$. This together with $(\ref{eq:10})$ implies 
that
\begin{equation}\label{eq:11}
\rho \left\langle w, y-y_n^i\right\rangle \ge \left\langle y_n^i-x_n, y-y_n^i\right\rangle
\end{equation}
for all $y\in C$. Since $w\in \partial_2 f_i (x_n,y_n^i)$, 
\begin{equation}\label{eq:12}
f_i(x_n,y)-f_i(x_n,y_n^i)\ge \left\langle w, y-y_n^i\right\rangle, \forall y\in C.
\end{equation}
From $(\ref{eq:11})$ and $(\ref{eq:12})$, we get
\begin{equation}\label{eq:13}
\rho \left(f_i(x_n,y)-f_i(x_n,y_n^i)\right) \ge \left\langle y_n^i-x_n, y-y_n^i\right\rangle, \forall y\in C.
\end{equation}
Since  $x_n\rightharpoonup \bar{x}$ and $||x_n-y_n^i||\to 0$ as $n\to\infty,$ we find   $ \quad y_n^i\rightharpoonup \bar{x}$. 
Letting $n\to\infty$ in $(\ref{eq:13})$ and using assumption A3, we conclude that $f_i(\bar{x},y)\ge 0$ for all
 $y \in C$ (i=1,\ldots,N). Thus, $\bar{x}\in \bigcap_{i=1}^N EP(f_i),$ hence $\bar{x}\in F$. The proof of Lemma $\ref{lem.weakly_limit_point}$ is complete.
\end{proof}
\begin{theorem}\label{theorem1}
Let C be a nonempty closed convex subset of a real Hilbert space $H$. Suppose that $\left\{f_i\right\}_{i=1}^N$ is a finite family of 
bifunctions satisfying conditions ${\rm A1-A4}$ and $\left\{S_j\right\}_{j=1}^M$ is a finite family of nonexpansive mappings on $C.$
   Moreover, suppose that the solution set $F$ is nonempty. Then, the (infinite) sequence $\left\{x_n\right\}$ generated by Algorithm 
$\ref{algorithm1}$ converges strongly to $x^\dagger=P_F x_0$.
\end{theorem}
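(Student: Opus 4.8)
The plan is to treat separately the case where Algorithm \ref{algorithm1} stops after finitely many iterations and the case where it produces an infinite sequence. In the finite case there is nothing left to do: if $x_{n+1}=x_n$ for some $n$, then Lemma \ref{lem:FiniteIteration} gives $x_n\in F$, while $x_n=P_{C_n\cap Q_n}(x_0)$ and $F\subset C_n\cap Q_n$ by Lemma \ref{lem:CQ_closedconvex}; since $x_n$ minimizes $\|\cdot-x_0\|$ over $C_n\cap Q_n$ and lies in the subset $F$, it is also the minimizer over $F$, i.e.\ $x_n=P_F(x_0)=x^\dagger$, so the (eventually constant) sequence converges to $x^\dagger$.

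For the infinite case I would first record the basic a priori bound. Since $x_n=P_{Q_n}(x_0)$ and $x^\dagger\in F\subset Q_n$ by Lemma \ref{lem:CQ_closedconvex}, inequality (\ref{eq:3}) applied with $u=x^\dagger$ gives
$$\|x^\dagger-x_n\|^2\le\|x^\dagger-x_0\|^2-\|x_n-x_0\|^2\le\|x^\dagger-x_0\|^2,$$
so $\{x_n\}$ is bounded and, writing $L:=\lim_{n\to\infty}\|x_n-x_0\|$ (this limit exists, being nondecreasing, as shown in the proof of Lemma \ref{lem.limits}), we have $L\le\|x^\dagger-x_0\|$. Next I would identify the weak cluster points. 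Boundedness yields a subsequence $x_{n_k}\rightharpoonup\bar x$ for some $\bar x\in C$, and Lemma \ref{lem.weakly_limit_point} gives $\bar x\in F$. Weak lower semicontinuity of $\|\cdot-x_0\|$ then gives $\|\bar x-x_0\|\le\liminf_{k\to\infty}\|x_{n_k}-x_0\|=L\le\|x^\dagger-x_0\|$, while the minimality property of $x^\dagger=P_F(x_0)$ gives the reverse inequality $\|x^\dagger-x_0\|\le\|\bar x-x_0\|$. Hence $L=\|x^\dagger-x_0\|=\|\bar x-x_0\|$, and uniqueness of the nearest point of $F$ to $x_0$ forces $\bar x=x^\dagger$. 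Since every weakly convergent subsequence of $\{x_n\}$ has this same limit, $x_n\rightharpoonup x^\dagger$ and $\|x_n-x_0\|\to\|x^\dagger-x_0\|$.

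It remains to upgrade this to strong convergence, which is immediate: letting $n\to\infty$ in the displayed inequality above, the right-hand side $\|x^\dagger-x_0\|^2-\|x_n-x_0\|^2$ tends to $0$, so $\|x_n-x^\dagger\|\to0$, i.e.\ $x_n\to x^\dagger=P_F(x_0)$ strongly. Finally, combining this with the estimates $\|x_n-y_n^i\|\to0$, $\|x_n-z_n^i\|\to0$, $\|x_n-u_n^j\|\to0$ from Lemma \ref{lem.limits} shows that the auxiliary sequences $\{y_n^i\},\{z_n^i\},\{u_n^j\}$ converge strongly to $x^\dagger$ as well.

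The genuine content of the argument is the middle step — showing $\bar x=x^\dagger$ — and the only obstacle there is making the two inequalities meet: on one side the ``projection onto $Q_n$'' bound $\|x_n-x_0\|\le\|x^\dagger-x_0\|$, which is exactly what the half-space $Q_n$ of the hybrid scheme is built to deliver; on the other side the variational characterization of $P_F(x_0)$ as the point of $F$ closest to $x_0$. Everything else — boundedness of all the iterates, the asymptotic regularity $\|x_n-S_jx_n\|\to0$, and the fact that weak cluster points solve all the equilibrium problems and fix all the $S_j$ — has already been packaged into Lemmas \ref{lem.limits} and \ref{lem.weakly_limit_point}, so no further work with the bifunctions or the mappings is needed here.
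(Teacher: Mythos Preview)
Your proof is correct and follows essentially the same strategy as the paper: show boundedness, identify every weak cluster point as $x^\dagger$ via Lemma~\ref{lem.weakly_limit_point} and the minimality of $P_F(x_0)$, then upgrade to strong convergence. The only noteworthy variation is in this last step: the paper invokes the Kadec--Klee property (weak convergence plus norm convergence of $x_{n_j}-x_0$ implies strong convergence), whereas you use the sharper projection inequality $\|x^\dagger-x_n\|^2\le\|x^\dagger-x_0\|^2-\|x_n-x_0\|^2$ directly, which is slightly more elementary and yields strong convergence of the whole sequence at once; your explicit treatment of the finite-termination case, showing $x_n=P_F(x_0)$ there too, is an addition the paper's proof of Theorem~\ref{theorem1} does not make.
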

\begin{proof} It is directly followed from Lemma $\ref{lem:CQ_closedconvex}$ that the sets $F, C_n, Q_n$ are closed convex subsets of $C$ and 
$F\subset C_n\bigcap Q_n$ for all $n\ge 0$. Moreover, from Lemma $\ref{lem.limits}$ we see that the sequence $\left\{x_n\right\}$ is bounded. Suppose that $\bar{x}$ is any weak limit point of $\left\{x_n\right\}$ and $x_{n_j}\rightharpoonup \bar{x}$. By Lemma $\ref{lem.weakly_limit_point}$, $\bar{x}\in F$. We now show that the sequence $\left\{x_n\right\}$ converges strongly to $x^\dagger:=P_F x_0$. Indeed, from $x^\dagger \in F$ and $(\ref{eq:3})$, we obtain
$$
||x_{n_j}-x_0||\le||x^\dagger-x_0||.
$$
The last inequality together with $x_{n_j}\rightharpoonup \bar{x}$ and the weak lower semicontinuity of the norm $||.||$ implies that 
$$
||\bar{x}-x_0||\le \lim\inf_{j\to\infty}||x_{n_j}-x_0||\le \lim\sup_{j\to\infty}||x_{n_j}-x_0||\le||x^\dagger-x_0||.
$$
By the definition of $x^\dagger$, $\bar{x}=x^\dagger$ and $\lim_{j\to\infty}||x_{n_j}-x_0||=||x^\dagger-x_0||$. 
Since $x_{n_j} - x_0 \rightharpoonup \bar{x}- x_0 = x^\dagger - x_0,$  the Kadec-Klee property of the Hilbert space $H$ ensures that 
$x_{n_j} - x_0 \to x^\dagger - x_0,$ hence $x_{n_j}  \to x^\dagger$ as $j \to \infty.$ Since $\bar{x} =x^\dagger$ is any weak limit point of $\left\{x_n\right\}$, the sequence $\left\{x_n\right\}$ converges strongly to $x^\dagger:=P_F x_0$. The proof of Theorem $\ref{theorem1}$ is complete.
\end{proof}
\begin{corollary}\label{cor1}
Let C be a nonempty closed convex subset of a real Hilbert space $H$. Suppose that $\left\{f_i\right\}_{i=1}^N$ is a finite family of 
bifunctions satisfying conditions $A1-A4$, and the set $F=\bigcap_{i=1}^N EP(f_i)$ is nonempty. Let $\left\{x_n\right\}$ be the 
sequence generated in the following manner:
$$
\left\{
\begin{array}{ll}
&x_0\in C_0:=C, Q_0:=C,\\
&y_n^i = {\rm argmin} \{ \rho f_i(x_n, y) +\frac{1}{2}||x_n-y||^2:  y \in C\} \quad i=1,\ldots,N,\\
&z_n^i = {\rm argmin} \{ \rho f_i(y_n^i, y) +\frac{1}{2}||x_n-y||^2:  y \in C\} \quad i=1,\ldots,N,\\
&i_n = {\rm argmax}\{||z_n^i - x_n||: i =1,\ldots,N\},\bar{z}_n:=z^{i_n}_n,\\
&C_n = \{v \in C: ||\bar{z}_n - v|| \leq ||x_n-v||\},\\
&Q_n = \{ v \in C:  \langle x_0 - x_n, v - x_n \rangle \leq 0 \},\\
&x_{n+1}=P_{C_n\bigcap Q_n}x_0,n\ge 0,
\end{array}
\right.
$$
where $ 0< \rho <\min \left(\frac{1}{2c_1},\frac{1}{2c_2}\right)$. Then the sequence $\left\{x_n\right\}$ converges strongly to 
$x^\dagger=P_F x_0$.
\end{corollary}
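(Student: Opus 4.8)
The plan is to recognize the iteration in Corollary \ref{cor1} as the special case of Algorithm \ref{algorithm1} obtained by taking $M=1$, $S_1=I$ (the identity mapping on $C$, which is trivially nonexpansive and satisfies $F(S_1)=C$), and $\alpha_n\equiv 0$. With this data, Step 4 of Algorithm \ref{algorithm1} becomes $u_n^1=0\cdot x_n+1\cdot I(\bar z_n)=\bar z_n$; since $M=1$, Step 5 forces $j_n=1$ and $\bar u_n=u_n^1=\bar z_n$; hence the sets $C_n$ and $Q_n$ of Step 6 are precisely those appearing in the statement, and the recursion defining $\left\{x_n\right\}$ in the corollary coincides with Algorithm \ref{algorithm1}. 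Moreover, since $F(I)=C$ and each $EP(f_i)\subset C$, the solution set of Algorithm \ref{algorithm1} here is $F=\left(\bigcap_{i=1}^N EP(f_i)\right)\cap C=\bigcap_{i=1}^N EP(f_i)$, which is nonempty by assumption, and the $f_i$ satisfy A1--A4. Therefore Theorem \ref{theorem1} applies and yields the strong convergence of $\left\{x_n\right\}$ to $x^\dagger=P_F x_0$, which is the claim.

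The one point requiring comment is that Algorithm \ref{algorithm1} nominally imposes $\left\{\alpha_k\right\}\subset(0,1)$, whereas here $\alpha_n=0$. This is harmless: throughout Lemmas \ref{lem:CQ_closedconvex}--\ref{lem.weakly_limit_point} and Theorem \ref{theorem1}, the sequence $\left\{\alpha_n\right\}$ enters only through the convexity estimate $(\ref{eq:2})$, routine boundedness arguments, and the hypothesis $\limsup_{n\to\infty}\alpha_n<1$ invoked in Lemma \ref{lem.limits} --- all of which remain valid when $\alpha_n=0$. In Lemma \ref{lem:FiniteIteration} the relation $u_n^j=x_n$ with $\alpha_n=0$ gives $S_j\bar z_n=x_n$ directly, so that argument is unaffected as well. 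Alternatively, one may simply reproduce the reasoning of Section 3 with the family $\left\{S_j\right\}$ absent: the argument becomes strictly shorter, as Steps 4--5 and the estimate $(\ref{eq:9})$ disappear, and no new idea is needed.

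I expect the only (minor) obstacle to be bookkeeping: confirming that the degenerate choices $S_1=I$ and $\alpha_n\equiv 0$ are legitimate in Theorem \ref{theorem1}, i.e.\ that nothing in the chain from Lemma \ref{lem.help1} through Lemma \ref{lem.weakly_limit_point} secretly uses $\alpha_n>0$ or a genuinely contractive $S_j$. Once this is verified, the corollary follows immediately from Theorem \ref{theorem1}.
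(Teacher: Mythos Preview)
Your proposal is correct and matches the paper's intent: Corollary \ref{cor1} is stated there without proof, as an immediate specialization of Theorem \ref{theorem1}, and your choice $M=1$, $S_1=I$, $\alpha_n\equiv 0$ is precisely the way to effect that specialization so that $\bar u_n=\bar z_n$ and the two iterations coincide. Your check that the boundary value $\alpha_n=0$ causes no trouble in Lemmas \ref{lem:CQ_closedconvex}--\ref{lem.weakly_limit_point} is accurate and is the only point that needed attention.
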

\begin{corollary}\label{cor2}
Let C be a nonempty closed convex subset of a real Hilbert space $H$. Suppose that $\left\{A_i\right\}_{i=1}^N$ is a finite family of 
pseudomonotone and $L$-Lipschitz continuous mappings from $C$ to $H$ such that $F=\bigcap_{i=1}^N VI(A_i,C)$ is nonempty, 
where $VI(A_i,C)=\left\{x^*\in C: \left\langle A(x^*),y-x^*\right\rangle\ge 0,~\forall y\in C\right\}$. 
Let $\left\{x_n\right\}$ be the sequence generated in the following manner:
$$
\left\{
\begin{array}{ll}
&x_0\in C_0:=C, Q_0:=C,\\
&y_n^i = P_C\left(x_n-\rho A_i(x_n)\right) \quad i=1,\ldots,N,\\
&z_n^i = P_C\left(x_n-\rho A_i(y_n^i)\right) \quad i=1,\ldots,N,\\
&i_n = {\rm argmax}\{||z_n^i - x_n||: i =1,\ldots,N\},\bar{z}_n:=z^{i_n}_n,\\
&C_n = \{v \in C: ||\bar{z}_n - v|| \leq ||x_n-v||\},\\
&Q_n = \{ v \in C:  \langle x_0 - x_n, v - x_n \rangle \leq 0 \},\\
&x_{n+1}=P_{C_n\bigcap Q_n}x_0,n\ge 0,
\end{array}
\right.
$$
where $ 0< \rho <\frac{1}{L}$. Then the sequence $\left\{x_n\right\}$ converges strongly to $x^\dagger=P_F x_0$.
\end{corollary}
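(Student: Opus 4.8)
The plan is to read the iteration of Corollary~\ref{cor2} as the special case of the scheme of Corollary~\ref{cor1} corresponding to the bifunctions $f_i(x,y):=\langle A_i(x),y-x\rangle$, $i=1,\ldots,N$, for which $EP(f_i)=VI(A_i,C)$, and then to verify that these $f_i$ satisfy Assumptions A1--A4, so that Corollary~\ref{cor1} applies directly. The first step is to observe that the two strongly convex subproblems of Corollary~\ref{cor1} reduce exactly to the projection steps of Corollary~\ref{cor2}. Indeed, completing the square gives
\[
\rho\langle A_i(x_n),y-x_n\rangle+\frac{1}{2}\|x_n-y\|^2=\frac{1}{2}\|y-(x_n-\rho A_i(x_n))\|^2-\frac{\rho^2}{2}\|A_i(x_n)\|^2 ,
\]
so that $\arg\min\{\rho\langle A_i(x_n),y-x_n\rangle+\frac12\|x_n-y\|^2:y\in C\}=P_C(x_n-\rho A_i(x_n))=y_n^i$, and similarly $z_n^i=P_C(x_n-\rho A_i(y_n^i))$. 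Hence the sequence $\{x_n\}$ of Corollary~\ref{cor2} coincides with the one produced by Corollary~\ref{cor1} for this family $\{f_i\}$, and it remains to check A1--A4.

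Three of the four assumptions are immediate. A1 is precisely the definition of pseudomonotonicity of $A_i$ recalled in Section~\ref{Preli}: $f_i(x,y)\ge0$ means $\langle A_i(x),y-x\rangle\ge0$, which forces $\langle A_i(y),y-x\rangle\ge0$, i.e. $f_i(y,x)\le0$. A4 is trivial since $f_i(x,\cdot)$ is affine, hence convex and subdifferentiable on $C$ with $\partial_2 f_i(x,y)=\{A_i(x)\}$. For A2, a one-line computation gives, for all $x,y,z\in C$,
\[
f_i(x,y)+f_i(y,z)-f_i(x,z)=\langle A_i(y)-A_i(x),z-y\rangle\ge -L\|x-y\|\,\|y-z\|\ge -\frac{L}{2}\|x-y\|^2-\frac{L}{2}\|y-z\|^2 ,
\]
so A2 holds with $c_1=c_2=L/2$; then $\min\{1/(2c_1),1/(2c_2)\}=1/L$, which is exactly the admissible range $0<\rho<1/L$ imposed in the statement. (In particular Lemmas~\ref{EPexistence} and \ref{lem.Equivalent_MinPro} apply, so $F=\cap_{i=1}^N VI(A_i,C)$ is closed, convex, nonempty and $x^\dagger=P_F x_0$ is well-defined.)

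The only delicate point is A3: $L$-Lipschitz continuity is a norm-continuity property and does not by itself make $(x,y)\mapsto\langle A_i(x),y-x\rangle$ weakly continuous on $C\times C$. I would handle this in either of two ways. The simplest is to read into the hypothesis the standard additional requirement that each $A_i$ be sequentially weakly continuous on $C$; then A3 holds and Corollary~\ref{cor1} closes the proof unchanged. Alternatively, keeping bare Lipschitz continuity, one re-examines the single place where A3 is used, namely the passage to the limit in $(\ref{eq:13})$, which here reads $\rho\langle A_i(x_n),y-y_n^i\rangle\ge\langle y_n^i-x_n,y-y_n^i\rangle$; writing $\langle A_i(x_n),y-y_n^i\rangle=\langle A_i(x_n),y-x_n\rangle+\langle A_i(x_n),x_n-y_n^i\rangle$ and using boundedness of $\{A_i(x_n)\}$ together with $\|x_n-y_n^i\|\to0$ from Lemma~\ref{lem.limits}, one gets $\liminf_{n\to\infty}\langle A_i(x_n),y-x_n\rangle\ge0$ for every $y\in C$; combining this with pseudomonotonicity and the norm-continuity of $A_i$ (a Minty-type argument, using $x_n\rightharpoonup\bar x$ and $\langle A_i(y),y-x_n\rangle\to\langle A_i(y),y-\bar x\rangle$ for fixed $y$) yields $\bar x\in VI(A_i,C)$ for every $i$, which is exactly the conclusion of Lemma~\ref{lem.weakly_limit_point} for this scheme. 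Once that is in hand, the projection-and-Kadec-Klee argument of Theorem~\ref{theorem1} gives $x_n\to P_F x_0$ verbatim, with no $S_j$ present. I expect this weak-continuity point in A3 to be the main obstacle; everything else is a routine specialization.
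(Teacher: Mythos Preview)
Your approach coincides with the paper's: define $f_i(x,y)=\langle A_i(x),y-x\rangle$, check A1--A4, reduce the strongly convex subproblems to projections via completing the square, and invoke the main theorem with $S_j=I$ (the paper cites Theorem~\ref{theorem1} directly rather than Corollary~\ref{cor1}, but this is immaterial). Your treatment of A3 is in fact more careful than the paper's, which simply asserts that ``Conditions A3, A4 are satisfied automatically'' without addressing the weak-continuity issue you raise; your Minty-type workaround is a genuine improvement over the paper's argument at that point.
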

\begin{proof}
Let $f_i(x,y)=\left\langle A_i(x),y-x\right\rangle$ for all $x,y\in C$ and $i=1,\ldots,N$.\\
 Since $A_i$ is $L$-Lipschitz continuous, for all 
$x,y,z\in C$
\begin{eqnarray*}
f_i(x,y)+f_i(y,z)-f_i(x,z)&&=\left\langle A_i(x),y-x\right\rangle+\left\langle A_i(y),z-y\right\rangle-\left\langle A_i(x),z-x\right\rangle\\ 
&&=-\left\langle A_i(y)-A_i(x),y-z\right\rangle\\
&&\ge -||A_i(y)-A_i(x)|||y-z||\\
&&\ge -L||y-x||||y-z||\\
&&\ge -\frac{L}{2}||y-x||^2-\frac{L}{2}||y-z||^2.
\end{eqnarray*}
Therefore $f_i$ is Lipschitz-type continuous with $c_1=c_2=\frac{L}{2}$. Moreover, the pseudomonotonicity of $A_i$ ensures the 
pseudomonotonicity of  $f_i$. Conditions A3, A4 are satisfied automatically. According to Algorithm $\ref{algorithm1}$, we have
\begin{eqnarray*}
&&y_n^i = {\rm argmin} \{ \rho\left\langle A_i(x_n),y-x_n\right\rangle +\frac{1}{2}||x_n-y||^2:  y \in C\},\\ 
&&z_n^i = {\rm argmin} \{ \rho\left\langle A_i(y_n^i),y-y_n^i\right\rangle +\frac{1}{2}||x_n-y||^2:  y \in C\}.
\end{eqnarray*}
Or
\begin{eqnarray*}
&&y_n^i = {\rm argmin} \{\frac{1}{2}||y-(x_n-\rho A_i(x_n) )||^2:  y \in C\}=P_C(x_n-\rho A_i(x_n) ),\\ 
&&z_n^i = {\rm argmin} \{ \frac{1}{2}||y-(x_n-\rho A_i(y^i_n) )||^2:  y \in C\}=P_C(x_n-\rho A_i(y^i_n)).
\end{eqnarray*}
Application to Theorem $\ref{theorem1}$ with the above mentioned $f_i(x,y),(i=1,\ldots,N)$ and $S_j=I, (j=1,\ldots,M)$ leads to the 
desired result.
\end{proof}
\begin{remark}
Putting $N=1$ in Corollary $\ref{cor2}$, we obtain the corresponding result of Nadezhkina and 
Takahashi  \cite[Theorem 4.1]{NT2006}.
\end{remark}
\noindent Now, replacing Mann's iteration in Step 4 of Algorithm $\ref{algorithm1}$ by Halpern's one, we come to the following
 algorithm.
\begin{algorithm}\label{algorithm1add} {\rm (Parallel hybrid Halpern-extragradient method)}\\
\noindent \textbf{Initialization} $x^0 \in C, 0< \rho <\min \left(\frac{1}{2c_1},\frac{1}{2c_2}\right),  n := 0$ and the sequence 
$\left\{\alpha_k\right\} \subset (0,1)$ satisfies the condition $\lim_{k\to\infty}\alpha_k=0$.\\
\textbf{Step 1.} Solve $N$ strongly convex programs in parallel
$$y_n^i = {\rm argmin} \{ \rho f_i(x_n, y) +\frac{1}{2}||x_n-y||^2:  y \in C\} \quad i=1,\ldots,N.$$
\textbf{Step 2.} Solve $N$ strongly convex programs in parallel
 $$ z_n^i = {\rm argmin} \{ \rho f_i(y_n^i, y) +\frac{1}{2}||x_n-y||^2:  y \in C\} \quad i=1,\ldots,N. $$
\textbf{Step 3.} Find among $z_n^i, \quad i =1,\ldots,N,$ the farthest element from $x_n$, i.e., 
$$ i_n = {\rm argmax}\{||z_n^i - x_n||: i =1,\ldots,N\},\bar{z}_n:=z^{i_n}_n. $$
\textbf{Step 4.} Find intermediate approximations $u_n^j$ in parallel
$$ u_n^j=\alpha_n x_0+(1-\alpha_n)S_j \bar{z}_n, j=1,\ldots,M. $$
\textbf{Step 5.} Find among $u_n^j, \quad j =1,\ldots,M,$ the farthest element from $x_n$, i.e., 
$$ j_n= {\rm argmax}\{||u_n^j - x_n||: j =1,\ldots,M\},\bar{u}_n:=u^{j_n}_n. $$
\textbf{Step 6.} Construct two closed convex subsets of C
\begin{eqnarray*}
&&C_n = \{v \in C: ||\bar{u}_n - v||^2\leq \alpha_n||x_0-v||^2+(1-\alpha_n)||x_n-v||^2\},\\ 
&&  Q_n = \{ v \in C:  \langle x_0 - x_n, v - x_n \rangle \leq 0 \}.
\end{eqnarray*}
\textbf{Step 7.} The next approximation $x_{n+1}$ is defined as the projection of $x_0$ onto $C_n \cap Q_n$, i.e.,
$$ x_{n+1}= P_{C_n \cap Q_n}(x_0). $$
\textbf{Step 8.} Put  $n:=n+1$ and go to \textbf{Step 1}.
\end{algorithm}
\begin{remark}
For Algorithm 2, the claim that $x_n$ is a common solution of the equlibrium and fixed point problems, if $x_{n+1} = x_n,$  in 
general is not true. So in practice, we need to use some "stopping rule" like if $n  > n_{\max}$  for some chosen sufficiently large number 
$n_{\max},$  then stop.
\end{remark}
\begin{theorem}\label{theorem1add}
Let C be a nonempty closed convex subset of a real Hilbert space $H$. Suppose that $\left\{f_i\right\}_{i=1}^N$ is a finite family of 
bifunctions satisfying conditions $A1-A4,$ and $\left\{S_j\right\}_{j=1}^M$ is a finite family of nonexpansive mappings on $C.$  
Moreover, suppose that the solution set $F$ is nonempty. Then, the sequence $\left\{x_n\right\}$ generated by the Algorithm 
$\ref{algorithm1add}$ converges strongly to $x^\dagger=P_F x_0$.
\end{theorem}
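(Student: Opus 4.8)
The plan is to mirror the proof of Theorem~\ref{theorem1}, adapting the four preparatory lemmas (Lemmas~\ref{lem:CQ_closedconvex}--\ref{lem.weakly_limit_point}) to the Halpern update $u_n^j=\alpha_n x_0+(1-\alpha_n)S_j\bar{z}_n$ and to the quadratic description of $C_n$. First I would verify feasibility: $Q_n$ is closed convex as before, and $C_n$ is closed convex because the map $v\mapsto \|\bar{u}_n-v\|^2-\alpha_n\|x_0-v\|^2-(1-\alpha_n)\|x_n-v\|^2$ is affine in $v$ (the quadratic-in-$v$ terms cancel). For $x^*\in F$, convexity of $\|\cdot\|^2$, nonexpansiveness of $S_{j_n}$, and Lemma~\ref{lem.help1} give
\[
\|\bar{u}_n-x^*\|^2\le \alpha_n\|x_0-x^*\|^2+(1-\alpha_n)\|S_{j_n}\bar{z}_n-x^*\|^2\le \alpha_n\|x_0-x^*\|^2+(1-\alpha_n)\|x_n-x^*\|^2,
\]
so $x^*\in C_n$; the inclusion $F\subset Q_n$ follows by the same induction as in Lemma~\ref{lem:CQ_closedconvex}, using $x_n=P_{C_{n-1}\cap Q_{n-1}}x_0$ and $(\ref{eq:EquivalentPC})$. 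Hence $x_{n+1}=P_{C_n\cap Q_n}x_0$ is well defined.

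Next I would establish asymptotic regularity exactly as in Lemma~\ref{lem.limits}: $x_n=P_{Q_n}x_0$ with $F\subset Q_n$ gives $\|x_n-x_0\|\le\|u-x_0\|$ for all $u\in F$, so $\{x_n\}$ is bounded; then $x_{n+1}\in Q_n$ and $(\ref{eq:ProperOfPC})$ show $\{\|x_n-x_0\|\}$ is nondecreasing and bounded, whence $\|x_{n+1}-x_n\|\to0$. Now $x_{n+1}\in C_n$ yields
\[
\|\bar{u}_n-x_{n+1}\|^2\le \alpha_n\|x_0-x_{n+1}\|^2+(1-\alpha_n)\|x_n-x_{n+1}\|^2,
\]
and here the hypothesis $\alpha_n\to0$ (rather than only $\limsup\alpha_n<1$) is what forces the right-hand side to $0$; hence $\|\bar{u}_n-x_n\|\to0$ and, by the choice of $j_n$, $\|u_n^j-x_n\|\to0$ for all $j$. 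Substituting $\|u_n^j-x^*\|^2\le\alpha_n\|x_0-x^*\|^2+(1-\alpha_n)\|\bar{z}_n-x^*\|^2$ together with Lemma~\ref{lem.help1} into the telescoping estimate of $(\ref{eq:6*2})$ (the extra $\alpha_n(\|x_0-x^*\|^2-\|x_n-x^*\|^2)$ term vanishes since $\alpha_n\to0$) gives $\|y_n^{i_n}-x_n\|\to0$ and $\|y_n^{i_n}-\bar{z}_n\|\to0$, hence $\|\bar{z}_n-x_n\|\to0$, and then $\|z_n^i-x_n\|\to0$, $\|y_n^i-x_n\|\to0$ for all $i$. Finally, from $u_n^j-x_n=\alpha_n(x_0-x_n)+(1-\alpha_n)(S_j\bar{z}_n-x_n)$ and $\alpha_n\to0$ we get $\|S_j\bar{z}_n-x_n\|\to0$, so $\|S_jx_n-x_n\|\le\|x_n-\bar{z}_n\|+\|S_j\bar{z}_n-x_n\|\to0$ for all $j$.

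With these limits in hand, the argument of Lemma~\ref{lem.weakly_limit_point} applies verbatim: for any weak cluster point $\bar{x}$ of $\{x_n\}$, demiclosedness of $I-S_j$ (Lemma~\ref{lem.demiclose}) gives $\bar{x}\in\bigcap_{j}F(S_j)$, and passing to the limit in $(\ref{eq:13})$ using assumption A3 and $y_n^i\rightharpoonup\bar{x}$ gives $\bar{x}\in\bigcap_i EP(f_i)$, so $\bar{x}\in F$. Then, as in the proof of Theorem~\ref{theorem1}, $\|x_{n_j}-x_0\|\le\|x^\dagger-x_0\|$ with $x^\dagger=P_Fx_0$ together with weak lower semicontinuity of the norm forces $\bar{x}=x^\dagger$ and $\|x_{n_j}-x_0\|\to\|x^\dagger-x_0\|$; since $\{\|x_n-x_0\|\}$ converges and $\bar{x}$ is an arbitrary weak cluster point, $x_n\rightharpoonup x^\dagger$ and $\|x_n-x_0\|\to\|x^\dagger-x_0\|$, so the Kadec--Klee property of $H$ yields $x_n\to x^\dagger=P_Fx_0$.

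The bulk of the work is routine once the estimates of Lemmas~\ref{lem:CQ_closedconvex} and~\ref{lem.limits} are re-derived in the Halpern setting; the only genuinely new point, and the place I expect to need care, is the role of the strengthened condition $\alpha_n\to0$. It is needed both to absorb the $\alpha_n\|x_0-v\|^2$ term appearing in the new $C_n$ and because, with the anchor $x_0$ in place of $x_n$, the quantity $S_j\bar{z}_n-x_n$ is no longer a multiple of $u_n^j-x_n$ and carries the extra term $\alpha_n(x_0-x_n)$ which only decays under $\alpha_n\to0$. I do not anticipate a serious obstacle beyond this bookkeeping.
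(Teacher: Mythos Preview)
Your proposal is correct and follows essentially the same route as the paper's proof: verify $F\subset C_n\cap Q_n$ with the Halpern-adjusted estimate, deduce boundedness and $\|x_{n+1}-x_n\|\to0$ as in Lemma~\ref{lem.limits}, use $x_{n+1}\in C_n$ together with $\alpha_n\to0$ to get $\|\bar{u}_n-x_n\|\to0$, then replay the chain of limits (with the extra $\alpha_n(\|x_0-x^*\|^2-\|x_n-x^*\|^2)$ term in the analogue of~(\ref{eq:6*2})) and finish via Lemma~\ref{lem.weakly_limit_point} and the Kadec--Klee argument. Your explicit remark that the defining function of $C_n$ is affine in $v$, and your slightly cleaner derivation of $\|S_jx_n-x_n\|\to0$ via $u_n^j-x_n=\alpha_n(x_0-x_n)+(1-\alpha_n)(S_j\bar{z}_n-x_n)$, are minor presentational improvements but not a different method.
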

\begin{proof}
Arguing similarly as in the proof of Lemma $\ref{lem:CQ_closedconvex}$ and Theorem $\ref{theorem1}$, we conclude that $F, C_n, Q_n$ 
are closed and convex. Besides, $F\subset C_n\cap Q_n$ for all $n\ge 0$. Moreover, the sequence $\left\{x_n\right\}$ is bounded and 
\begin{equation}\label{eq:1add}
\lim_{n\to\infty}||x_{n+1}-x_n||=0.
\end{equation}
Since $x_{n+1}\in C_{n+1}$,
$$
||\bar{u}_n-x_{n+1}||^2\le \alpha_n||x_0-x_{n+1}||^2+(1-\alpha_n)||x_n-x_{n+1}||^2.
$$
Letting $n\to\infty$, from $(\ref{eq:1add})$, $\lim_{n\to\infty}\alpha_n=0$ and the boundedness of $\left\{x_n\right\}$, we obtain
$$
\lim_{n\to\infty}||\bar{u}_n-x_{n+1}||=0.
$$
Proving similarly to $(\ref{eq:6})$ and $(\ref{eq:6*2})$, we get
$$
\lim_{n\to\infty}||u_n^j-x_{n}||=0, \quad j=1,\ldots,M,
$$
and
\begin{eqnarray}
&&(1-\alpha_n)(1-2\rho c_1)||y_n^{i_n}-x_n||^2+(1-2\rho c_2)||y_n^{i_n}-\bar{z}_n||^2\nonumber\\
&&\le\alpha_n(||x_0-x^*||^2-||x_n-x^*||^2)\nonumber\\ 
&&+||x_n-u_n^j||\left(||x_n-x^*||+||u_n^j-x^*||\right) \label{eq:5add}
\end{eqnarray}
for each $x^*\in F$. Letting $n\to\infty$ in $(\ref{eq:5add})$, one has
$$
\lim_{n\to\infty}||y_n^{i_n}-x_{n}||=\lim_{n\to\infty}||\bar{z}_n-x_{n}||=0, \quad j=1,\ldots,N,
$$
Repeating the proof of $(\ref{eq:8*})$ and $(\ref{eq:8*1})$, we get
$$
\lim_{n\to\infty}||y_n^i-x_{n}||=\lim_{n\to\infty}||z_n^i-x_{n}||=0, \quad i=1,\ldots,N.
$$
Using $u_n^j=\alpha_n x_0+(1-\alpha_n)S_j \bar{z}_n$, by a straightforward computation, we obtain
$$
||S_j x_n-x_n||\le||\bar{z}_n-x_n||+\frac{1}{1-\alpha_n}||u_n^j-x_n||+\frac{\alpha_n}{1-\alpha_n}||x_0-x_n||,
$$
which implies that $\lim_{n\to\infty}||S_j x_n-x_n||=0$. The rest of the proof of Theorem $\ref{theorem1add}$ is similar to the arguments 
in the proofs of Lemma $\ref{lem.weakly_limit_point}$ and Theorem $\ref{theorem1}$.
\end{proof}
\noindent Next replacing Steps 4 and 5 in Algorithm 1, consisting of a Mann's iteration and a parallel splitting-up step, by an iteration step 
involving a convex combination of the identity mapping $I$ and the mappings $S_j, ~ j =1,\ldots, N$, we come to the following algorithm.
\begin{algorithm}\label{algorithm2} {\rm (Parallel hybrid iteration-extragradient method)}\\
\noindent \textbf{Initialization}: $x^0 \in C, 0< \rho <\min \left(\frac{1}{2c_1},\frac{1}{2c_2}\right) ,  n := 0$ and the positive sequences 
$\left\{\alpha_{k,l}\right\}_{k=1}^\infty(l=0,\ldots,M)$ satisfy the conditions: $0\le\alpha_{k,j}\le 1$, $\sum_{j=0}^M \alpha_{k,j}=1$, 
$\lim\inf_{k\to\infty}\alpha_{k,0}\alpha_{k,l}>0$ for all $l=1,\ldots,M$.\\
\textbf{Step 1.} Solve $N$ strongly convex programs in parallel
$$y_n^i = {\rm argmin} \{ \rho f_i(x_n, y) +\frac{1}{2}||x_n-y||^2:  y \in C\} \quad i=1,\ldots,N.$$
\textbf{Step 2.} Solve $N$ strongly convex programs in parallel
 $$ z_n^i = {\rm argmin} \{ \rho f_i(y_n^i, y) +\frac{1}{2}||x_n-y||^2:  y \in C\} \quad i=1,\ldots,N. $$
\textbf{Step 3.} Find among $z_n^i, \quad i =1,\ldots,N,$ the farthest element from $x_n$, i.e., 
$$ i_n = {\rm argmax}\{||z_n^i - x_n||: i =1,\ldots,N\},\bar{z}_n:=z^{i_n}_n. $$
\textbf{Step 4.} Compute in parallel  $u_n^j := S_j\bar{z}_n; ~ j=1,\ldots, M,$  and put
$$ u_n=\alpha_{n,0}x_n+\sum_{j=1}^M \alpha_{n,j} u_n^j.$$\\
\textbf{Step 5.} Construct two closed convex subsets of C
\begin{eqnarray*}
&&C_n = \{v \in C: ||u_n - v||\leq ||x_n-v||\},\\ 
&& Q_n = \{ v \in C:  \langle x_0 - x_n, v - x_n \rangle \leq 0 \}.
\end{eqnarray*}
\textbf{Step 6.} The next approximation $x_{n+1}$ is determined as the projection of $x_0$ onto $C_n \cap Q_n$, i.e.,
$$ x_{n+1}= P_{C_n \cap Q_n}(x_0). $$
\textbf{Step 7.} If  $x_{n+1}=x_n$ then stop. Otherwise, set $n:= n+1$ and go to \textbf{Step 1}.
\end{algorithm}
\begin{remark}
Arguing similarly as in the proof of Lemma $\ref{lem:FiniteIteration}$, we can prove that if Algorithm $\ref{algorithm2}$ finishes at a 
finite iteration $n<\infty,$ then $x_n\in F$, i.e., $x_n$ is a common element of the set of solutions of  equilibrium problems and the set 
of fixed points of  nonexpansive mappings.
\end{remark}
\begin{theorem}\label{theorem2}
Let C be a nonempty closed convex subset of a real Hilbert space $H$. Suppose that $\left\{f_i\right\}_{i=1}^N$ is a finite family of 
bifunctions satisfying conditions $A1-A4,$ and $\left\{S_j\right\}_{j=1}^M$ is a finite family of nonexpansive mappings on $C.$  
Moreover, suppose that the solution set $F$ is nonempty. Then, the (infinite) sequence $\left\{x_n\right\}$ generated by the Algorithm 
$\ref{algorithm2}$ converges strongly to $x^\dagger=P_F x_0$.
\end{theorem}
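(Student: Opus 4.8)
The plan is to follow the three-stage pattern already established for Algorithm~\ref{algorithm1}: \emph{(i)} prove that $F,C_n,Q_n$ are closed convex with $F\subset C_n\cap Q_n$, so that each $x_{n+1}=P_{C_n\cap Q_n}(x_0)$ is well defined; \emph{(ii)} derive the limit relations $\|x_{n+1}-x_n\|\to0$, $\|u_n-x_n\|\to0$, $\|y_n^i-x_n\|\to0$, $\|z_n^i-x_n\|\to0$ ($i=1,\dots,N$) and $\|S_jx_n-x_n\|\to0$ ($j=1,\dots,M$); and \emph{(iii)} pass to an arbitrary weak limit point $\bar{x}$ of $\{x_n\}$, show $\bar{x}\in F$, and upgrade weak to strong convergence to $x^\dagger=P_Fx_0$. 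Stages \emph{(i)} and \emph{(iii)} are essentially verbatim copies of Lemmas~\ref{lem:CQ_closedconvex}, \ref{lem.weakly_limit_point} and the proof of Theorem~\ref{theorem1}, since Steps~1--3 of Algorithm~\ref{algorithm2} coincide with those of Algorithm~\ref{algorithm1}. For stage \emph{(i)} the only modification is that the estimate $(\ref{eq:2})$ is replaced by
$$
\|u_n-x^*\|^2\le\alpha_{n,0}\|x_n-x^*\|^2+\sum_{j=1}^M\alpha_{n,j}\|S_j\bar{z}_n-x^*\|^2\le\|x_n-x^*\|^2,
$$
which uses the convexity of $\|\cdot\|^2$, $\sum_{j=0}^M\alpha_{n,j}=1$, the nonexpansiveness of the $S_j$, and Lemma~\ref{lem.help1} (so that $\|\bar{z}_n-x^*\|\le\|x_n-x^*\|$); the induction $F\subset Q_n$ and the concluding Kadec--Klee argument carry over unchanged.

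Before stage \emph{(ii)} I would record an elementary consequence of the standing hypothesis $\liminf_{k\to\infty}\alpha_{k,0}\alpha_{k,l}>0$ ($l=1,\dots,M$): there exist $\varepsilon>0$ and $n_0$ such that $\alpha_{n,0}\ge\varepsilon$ and $\alpha_{n,l}\ge\varepsilon$ for all $n\ge n_0$ and all $l$; in particular $\alpha_{n,0}=1-\sum_{l=1}^M\alpha_{n,l}\le1-M\varepsilon$, so both $\{\alpha_{n,0}\}$ and $\{1-\alpha_{n,0}\}$ stay bounded away from $0$, and for $n\ge n_0$ all weights $\alpha_{n,0},\dots,\alpha_{n,M}$ are positive. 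The boundedness of $\{x_n\}$ (hence of $\{\bar{z}_n\},\{u_n\},\{S_j\bar{z}_n\},\{y_n^i\},\{z_n^i\}$) and $\|x_{n+1}-x_n\|\to0$ then follow exactly as in $(\ref{eq:3})$--$(\ref{eq:5})$ (using $x_n=P_{Q_n}x_0$ and $x_{n+1}\in Q_n$), and $x_{n+1}\in C_n$ gives $\|u_n-x_n\|\le2\|x_{n+1}-x_n\|\to0$. Combining Lemma~\ref{lem.help1} with the displayed convexity estimate yields
$$
\|u_n-x^*\|^2\le\|x_n-x^*\|^2-(1-\alpha_{n,0})\bigl[(1-2\rho c_1)\|y_n^{i_n}-x_n\|^2+(1-2\rho c_2)\|y_n^{i_n}-\bar{z}_n\|^2\bigr],
$$
so, arguing as in $(\ref{eq:6*2})$--$(\ref{eq:8*1})$ with the bound $\|x_n-x^*\|^2-\|u_n-x^*\|^2\le\|x_n-u_n\|\bigl(\|x_n-x^*\|+\|u_n-x^*\|\bigr)\to0$ and $\liminf(1-\alpha_{n,0})>0$, we get $\|y_n^{i_n}-x_n\|\to0$ and $\|y_n^{i_n}-\bar{z}_n\|\to0$, hence $\|\bar{z}_n-x_n\|\to0$, and then $\|z_n^i-x_n\|\to0$ and $\|y_n^i-x_n\|\to0$ for every $i=1,\dots,N$.

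The single genuinely new step is $\|S_jx_n-x_n\|\to0$, where the ``farthest element'' device of Algorithm~\ref{algorithm1} is replaced by Lemma~\ref{lem:PU2008}. Fix $x^*\in F$, put $r:=\sup_n\|x_n-x^*\|<\infty$, and set $w_0:=x_n-x^*$, $w_k:=S_k\bar{z}_n-x^*$ ($k=1,\dots,M$); by Lemma~\ref{lem.help1} and nonexpansiveness all $w_k$ lie in $B_r(0)$, $\|w_k\|\le\|\bar{z}_n-x^*\|\le\|w_0\|$ for $k\ge1$, and $u_n-x^*=\sum_{k=0}^M\alpha_{n,k}w_k$. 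For $n\ge n_0$ the weights are positive, so Lemma~\ref{lem:PU2008} applied with the index pair $0<l$ gives
$$
\|u_n-x^*\|^2\le\sum_{k=0}^M\alpha_{n,k}\|w_k\|^2-\alpha_{n,0}\alpha_{n,l}\,g\bigl(\|x_n-S_l\bar{z}_n\|\bigr)\le\|x_n-x^*\|^2-\alpha_{n,0}\alpha_{n,l}\,g\bigl(\|x_n-S_l\bar{z}_n\|\bigr),
$$
whence $\alpha_{n,0}\alpha_{n,l}\,g(\|x_n-S_l\bar{z}_n\|)\le\|x_n-x^*\|^2-\|u_n-x^*\|^2\to0$; since $\liminf\alpha_{n,0}\alpha_{n,l}>0$, this forces $g(\|x_n-S_l\bar{z}_n\|)\to0$, and as $g$ is continuous, strictly increasing and $g(0)=0$ we obtain $\|x_n-S_l\bar{z}_n\|\to0$, so $\|x_n-S_lx_n\|\le\|x_n-S_l\bar{z}_n\|+\|S_l\bar{z}_n-S_lx_n\|\le\|x_n-S_l\bar{z}_n\|+\|\bar{z}_n-x_n\|\to0$.

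With all limit relations of stage \emph{(ii)} in hand, stage \emph{(iii)} is a transcription of the earlier arguments: the demiclosedness of $I-S_j$ (Lemma~\ref{lem.demiclose}) gives $\bar{x}\in\bigcap_{j=1}^M F(S_j)$; the subdifferential/normal-cone computation leading to $(\ref{eq:13})$, together with $\|x_n-y_n^i\|\to0$ and assumption A3, gives $\bar{x}\in\bigcap_{i=1}^N EP(f_i)$, so $\bar{x}\in F$; and the weak-lower-semicontinuity plus Kadec--Klee argument of Theorem~\ref{theorem1} identifies $\bar{x}=x^\dagger=P_Fx_0$ and yields strong convergence of the whole sequence. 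I expect the only points requiring care to be the extraction of a single modulus $g$ valid for all $n$ from Lemma~\ref{lem:PU2008} (legitimate once $r$ is fixed by the boundedness of the iterates) and the observation that $\liminf\alpha_{n,0}\alpha_{n,l}>0$ keeps $\alpha_{n,0}$ away from both $0$ and $1$; the rest is routine repetition of the already-proved lemmas.
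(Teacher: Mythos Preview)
Your proposal is correct and follows essentially the same route as the paper: the paper also refers back to the proofs of Lemma~\ref{lem:CQ_closedconvex}, Lemma~\ref{lem.limits}, Lemma~\ref{lem.weakly_limit_point} and Theorem~\ref{theorem1} for stages (i)--(iii), and the one genuinely new ingredient in both arguments is exactly the application of Lemma~\ref{lem:PU2008} to the convex combination $u_n=\alpha_{n,0}x_n+\sum_{j=1}^M\alpha_{n,j}S_j\bar z_n$ in order to extract $\|S_l\bar z_n-x_n\|\to0$ from $\|u_n-x_n\|\to0$ and $\liminf\alpha_{n,0}\alpha_{n,l}>0$. Your explicit observation that the standing hypothesis forces $\alpha_{n,0}$ to stay bounded away from both $0$ and $1$ (hence $\liminf(1-\alpha_{n,0})>0$) is a detail the paper leaves implicit when it asserts the limits in $(\ref{eq:14})$ ``similarly as in the proof of Theorem~\ref{theorem1}''.
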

\begin{proof}
Arguing similarly as in the proof of Theorem $\ref{theorem1}$, we can conclude that $F, C_n, Q_n$ are closed convex subsets of $C$. 
Besides, $F\subset C_n \bigcap Q_n$ and
\begin{equation}\label{eq:14}
\lim_{n\to\infty}||x_{n+1}-x_n||=\lim_{n\to\infty}||y_{n}^i-x_n||=\lim_{n\to\infty}||z_{n}^i-x_n||=\lim_{n\to\infty}||u_n-x_n||=0
\end{equation}
for all $i=1,\ldots, N$. For every $x^*\in F$, by Lemmas $\ref{lem:PU2008}$ and $\ref{lem.help1}$, we have
\begin{eqnarray*}
||u_n-x^*||^2&&=||\alpha_{n,0}x_n+\sum_{j=1}^M \alpha_{n,j}S_j\bar{z}_n-x^*||^2\\ 
&& =||\alpha_{n,0}(x_n-x^*)+\sum_{j=1}^M\alpha_{n,j} (S_j\bar{z}_n-x^*)||^2\\
&&\le \alpha_{n,0}||x_n-x^*||^2+\sum_{j=1}^M \alpha_{n,j}||S_j\bar{z}_n-x^*||^2-\alpha_{n,0}\alpha_{n,l}g(||S_l\bar{z}_n-x_n||)\\
&&\le \alpha_{n,0}||x_n-x^*||^2+\sum_{j=1}^M \alpha_{n,j}||\bar{z}_n-x^*||^2-\alpha_{n,0}\alpha_{n,l}g(||S_l\bar{z}_n-x_n||)\\
&&\le \alpha_{n,0}||x_n-x^*||^2+\sum_{j=1}^M \alpha_{n,j}||x_n-x^*||^2-\alpha_{n,0}\alpha_{n,l}g(||S_l\bar{z}_n-x_n||)\\
&&\le ||x_n-x^*||^2-\alpha_{n,0}\alpha_{n,l}g(||S_l\bar{z}_n-x_n||).
\end{eqnarray*}
Therefore 
\begin{eqnarray*}
\alpha_{n,0}\alpha_{n,l}g(||S_l\bar{z}_n-x_n||)&&\le||x_n-x^*||^2-||u_n-x^*||^2\\ 
&& \le\left(||x_n-x^*||-||u_n-x^*||\right)\left(||x_n-x^*||+||u_n-x^*||\right)\\
&& \le||x_n-u_n||\left(||x_n-x^*||+||u_n-x^*||\right).
\end{eqnarray*}
The last inequality together with $(\ref{eq:14})$, $\lim\inf_{n\to\infty}\alpha_{n,0}\alpha_{n,l}>0$ and the boundedness of 
$\left\{x_n\right\},\left\{u_n\right\}$ implies that
$\lim_{n\to\infty}g(||S_l\bar{z}_n-x_n||)=0$. Hence 
\begin{equation}\label{eq:15}
\lim_{n\to\infty}||S_l\bar{z}_n-x_n||=0.
\end{equation}
Moreover, from $(\ref{eq:14}),(\ref{eq:15})$ and $||S_l x_n-x_n||\le||S_l x_n-S_l 
\bar{z}_n||+||S_l\bar{z}_n-x_n||\le||x_n-\bar{z}_n||+||S_l\bar{z}_n-x_n||$ we obtain
$$
\lim_{n\to\infty}||S_lx_n-x_n||=0
$$
for all $l=1,\ldots,M$. The same argument as in the proofs of Lemma $\ref{lem.weakly_limit_point}$ and Theorem $\ref{theorem1}$ shows that the sequence 
$\left\{x_n\right\}$ converges strongly to $x^\dagger:=P_Fx_0$. The proof of Theorem $\ref{theorem2}$ is complete.
\end{proof}
\begin{remark}
Putting  $M=N=1$ in Theorems $\ref{theorem1}$ and $\ref{theorem2}$, we obtain the corresponding result announced in \cite[Theorem 
3.1]{A2011}.
\end{remark}
\section{Numerical experiment}
Let $H = \Re^1$ be a Hilbert space  with the standart inner product $\left\langle x,y \right\rangle := xy$ and the norm $||x|| := |x| $ 
for all $x, y \in H$.  Consider the bifunctions defined on the set $C:= [0, 1] \subset H$  by 
$$f_i(x,y) := B_i(x)(y-x),  i = 1,\ldots, N,$$ 
where $ B_i(x) = 0$ if  $0 \leq x \leq \xi_i,$ and $B_i(x) = \exp(x-\xi_i)+\sin(x-\xi_i)-1$ if $\xi_i \leq x \leq 1.$ Here $0<\xi_1 < \ldots < 
\xi_N< 1$. Obviously, conditions A3, A4 for the bifunctions $f_i$ are satisfied. Further, since $B_i(x)$ is nondecreasing on $[0,1]$,
$$ f_i(x,y)+f_i(y,x)=(x-y)(B_i(y)-B_i(x))\le 0. $$
Thus, each bifunction $f_i$ is monotone, and so is pseudomonotone. Moreover, $B_i(x)$ is 4-Lipschitz continuous. A 
straightforward calculation yields  $f_i(x,y) + f_i(y,z) - f_i(x,z) = (y-z)(B_i(x)-B_i(y)) \ge -4|x-y||y-z| \ge -2(x-y)^2 - 2(y-z)^2,$
which proves the Lipschitz-type continuity of $f_i$ with $c_1=c_2=2$. Finally,
$$ f_i(x,y)=B_i(x)(y-x)\ge 0,\quad \forall y\in [0,1] $$
if and only if $0\le x\le \xi_i$, i.e., $EP(f_i)=[0,\xi_i]$. Therefore $\cap_{i=1}^N EP(f_i)=[0,\xi_1].$\\
Define the mappings 
$$S_jx:= \frac{x^j\sin^{j-1}(x)}{2j-1}, \quad j=1,\ldots, M.$$
Clearly, $S_j: C \to C$ and 
$$|{S_j}'(x)| =\frac{1}{2j-1}|jx^{j-1}\sin^{j-1}(x)+(j-1)x^j\sin^{j-2}(x)\cos(x)| \leq 1.$$
Hence $S_j ,~ j=1,\ldots,M$ are nonexpansive mappings. Moreover, $F(S_1)=[0,1]$ and $F(S_j)=\left\{0\right\},~ j = 2, \ldots, M.$
Thus, the solution set $$F=\left(\cap_{i=1}^N EP(f_i)\right)\bigcap \left(\cap_{j=1}^M F(S_j)\right) = \{ 0\}.$$
By Algorithm $\ref{algorithm1}$, we have
\begin{equation}\label{eq:ex.16}
y_n^i=\arg\min \left\{\rho B_i(x_n)(y-x_n)+\frac{1}{2}(y-x_n)^2:y\in [0;1]\right\}.
\end{equation}
A simple computation shows that $(\ref{eq:ex.16})$  is equivalent to the following relation
$$
y_n^i=x_n-\rho B_i(x_n),\quad i=1,\ldots,N.
$$
Similarly, we obtain
\begin{equation}\label{eq:ex.18}
z_n^i=x_n-\rho B_i(y_n^i),\quad i=1,\ldots,N.
\end{equation}
From $(\ref{eq:ex.18})$, we can find the itermediate approximation $\bar{z}_n$ which is the farthest from $x_n$ among $z_n^i,\, i=1,\ldots,N.$
 Therefore,
\begin{equation}\label{eq:ex.19}
u_n^j=\alpha_n x_n+(1-\alpha_n)\frac{\bar{z}_n^j\sin^{j-1}(\bar{z}_n)}{2j-1}, \,j=1,\ldots,M.
\end{equation}
From $(\ref{eq:ex.19})$, we can find the intermediate approximation $\bar{u}_n$ which is farthest from $x_n$ among $u_n^j,\, 
j=1,\ldots,M$. By Lemma $\ref{lem:FiniteIteration}$, if $x_n=\bar{u}_n$, $x_n=0 \in F$. Otherwise, if $x_n>\bar{u}_n\ge 0$, by the proof of Theorem $\ref{theorem1}$, 
$0\in C_n$, i.e., $|\bar{u}_n|\le |x_n|,$ hence $0\le\bar{u}_n< x_n$. This together with the definitions of $C_n$ and $Q_n$ lead us to 
the following formulas:
\begin{eqnarray*}
&&C_n=\left[0,\frac{x_n+\bar{u}_n}{2}\right];\\
&&Q_n=[0,x_n].
\end{eqnarray*}
Therefore 
$$
C_n\cap Q_n=\left[0,\min \left\{x_n,\frac{x_n+\bar{u}_n}{2}\right\}\right].
$$
Since $\bar{u}_n\le x_n$, we find $\frac{x_n+\bar{u}_n}{2}\le x_n$. So
$$
C_n\cap Q_n=\left[0,\frac{x_n+\bar{u}_n}{2}\right].
$$
From the definition of $x_{n+1}$ we obtain
$$
x_{n+1}=\frac{x_n+\bar{u}_n}{2}.
$$
Thus we come to the following algorithm:\\
\textbf{Initialization} $x_0:=1$;  $n:=1$;  $\rho:=1/5$;  $\alpha_n:=1/n$;  $\epsilon:=10^{-5}$;  $\xi_i:=i/(N+1)$, $i=1,\ldots,N$; 
$N:=2\times 10^6$;  $M:=3\times 10^6$.\\
\textbf{Step 1.} Find the intermediate approximations $y_n^i$ in parallel $(i=1,\ldots,N)$.
$$
y_n^i=
\left\{
\begin{array}{ll}
&x_n \quad \mbox{if}\quad 0\le x_n \le \xi_i,\\
& x_n-\rho[\exp(x_n-\xi_i)+\sin(x_n-\xi_i)-1]  \quad \mbox{if}\quad \xi_i <x_n \le 1.
\end{array}
\right.
$$
\textbf{Step 2.} Find the intermediate approximations $z_n^i$ in parallel $(i=1,\ldots,N)$.
$$
z_n^i=
\left\{
\begin{array}{ll}
&x_n \quad \mbox{if}\quad 0\le y_n^i \le \xi_i,\\
& x_n-\rho[\exp(y_n^i-\xi_i)+\sin(y_n^i-\xi_i)-1]  \quad \mbox{if}\quad \xi_i <y_n^i \le 1.
\end{array}
\right.
$$
\textbf{Step 3.} Find the element $\bar{z}_n$ which is farthest from $x_n$ among $z_n^i,i=1,\ldots,N$.
$$
i_n=\arg\max\left\{|z_n^i-x_n|:i=1,\ldots,N\right\},\, \bar{z}_n=z_n^{i_n}.
$$
\textbf{Step 4.} Find the intermediate approximations $u_n^j$ in parallel
$$
u_n^j=\alpha_n x_n+(1-\alpha_n)\frac{\bar{z}_n^j\sin^{j-1}(\bar{z}_n)}{2j-1}, \,j=1,\ldots,M.
$$
\textbf{Step 5.} Find the element $\bar{u}_n$ which is farthest from $x_n$ among $u_n^j,j=1,\ldots,M$.
$$
j_n=\arg\max\left\{|u_n^j-x_n|:j=1,\ldots,M\right\},\, \bar{u}_n=z_n^{j_n}.
$$
\textbf{Step 6.} If $|\bar{u}_n-x_n|\le \epsilon$ then stop. Otherwise go to \textbf{Step 7.}\\
\textbf{Step 7.} $x_{n+1}=\frac{x_n+\bar{u}_n}{2}$.\\
\textbf{Step 8.} If $|x_{n+1}-x_n|\le \epsilon $ then stop. Otherwise, set $n:=n+1$ and go to \textbf{Step 1.}\\\\

\noindent The numerical experiment is performed on a LINUX cluster 1350 with 8 computing nodes. Each node contains two Intel Xeon 
dual core 3.2
 GHz, 2GBRam. All the programs are written in C.\\
For given tolerances we compare execution time of  the parallel hybrid Mann-extragradient method (PHMEM) in parallel 
and sequential modes.\\ 
We use the following notations:
{\begin{center}
\begin{tabular}{l l}
PHMEM & The parallel hybrid Mann-extragradient method\\
$TOL$ & Tolerance $\|x_k - x^*\|$ \\
$T_p$  & Time for PHMEM's execution in parallel mode (2CPUs - in seconds)\\
$T_s$ & Time for PHMEM's execution in sequential mode (in seconds) \\
\end{tabular}
\end{center}
\begin{table}[ht]\caption{Experiment with $\alpha_n = \frac{1}{n}. $}\label{tab:1}
\medskip\begin{center}
\begin{tabular}{|c|c|c|}
\hline
 $\qquad TOL \qquad$ & \multicolumn{2}{c|}{PHMEM}
\\ \cline{2-3}
& $\qquad T_p\qquad$ & $\qquad T_s\qquad$ \\ \hline
 $10^{-5}$ & 5.23 & 9.98 \\
 $10^{-6}$ & 5.86 & 11.25  \\
  $10^{-8}$ & 7.57 & 14.33  \\ \hline
 \end{tabular}
\end{center}
\end{table}
According to the above experiment,  in the most favourable cases the speed up and the efficiency of  
the parallel hybrid Mann-extragradient method are $S_p = T_s / T_p \approx 2;  E_p = S_p /2 \approx 1, $ respectively.
\section*{Concluding remarks}
In this paper we proposed three parallel hybrid extragradients methods for finding a common element of the set of solutions 
of equilibrium problems for pseudomonotone bifunctions $\left\{f_i\right\}_{i=1}^N$ and the set of fixed points  of nonexpansive mappings
 $\left\{S_j\right\}_{j=1}^M$ in Hilbert spaces, namely:
\begin{itemize}
\item a parallel hybrid Mann-extragradient method;
\item a parallel hybrid Halpern-extragradient method, and
\item a parallel hybrid iteration-extragradient method. 
\end{itemize}
The efficiency of the proposed parallel algorithms is verified by a simple numerical experiment on computing clusters.
\section*{Acknowledgments} The authors sincerely thank the Editor and anonymous reviewers for their constructive comments which helped 
to improve the quality and presentation of this paper.
We thank Dr. Vu Tien Dzung for performing computation on the LINUX cluster 1350. The research of 
the second and the third authors was partially supported by Vietnam Institute for Advanced Study in 
Mathematics. The third author expresses his gratitude to Vietnam National Foundation for Science and 
Technology Development for a financial support.

\end{document}